\titleformat{\subsection}[runin]
{\bfseries} {\thesubsection{.}}{0.15cm}{}[.]
\titleformat{\subsubsection}[runin]
{\em}{\thesubsubsection{.}}{0.15cm}{}[.]
\newtheorem{theorem}{Theorem}
\newtheorem{lemma}{Lemma}
\newtheorem{corollary}{Corollary}
\theoremstyle{definition}
\newtheorem{definition}{Definition}
\newtheorem{remark}{Remark}
\newtheorem{question}{Question}
\numberwithin{equation}{section}
\numberwithin{figure}{section}
\def\Acal{\mathcal{A}}
\def\Bcal{\mathcal{B}}
\def\Ccal{\mathcal{C}}
\def\Rcal{\mathcal{R}}
\def\Mcal{\mathcal{M}}
\def\Ncal{\mathcal{N}}
\def\Ucal{\mathcal{U}}
\def\Ascr{\mathscr{A}}
\def\Iscr{\mathscr{I}}
\def\Uscr{\mathscr{U}}
\def\Bscr{\mathscr{B}}
\def\c{\mathbb{C}}
\def\z{\mathbb{Z}}
\def\d{\mathbb{D}}
\def\b{\mathbb{B}}
\def\r{\mathbb{R}}
\def\n{\mathbb{N}}
\def\z{\mathbb{Z}}
\def\jgot{\mathfrak{j}}
\def\igot{\mathfrak{i}}
\def\dist{\mathrm{dist}}
\def\span{\mathrm{span}}
\def\length{\mathrm{length}}
\begin{document}

\fancyhead[CO]{Every bordered Riemann surface is a complete proper curve in a ball} 
\fancyhead[CE]{A.\ Alarc\'{o}n and F.\ Forstneri\v c}
%\fancyhead[CE]{A.\ Alarc\'{o}n$\$,  and$\$, F.\ Forstneri\v c} 
\fancyhead[RO,LE]{\thepage}

\thispagestyle{empty}

%% Title
\vspace*{1cm}
\begin{center}
{\bf\LARGE Every bordered Riemann surface is a complete proper curve in a ball}

\vspace*{0.5cm}

%% Authors
{\large\bf Antonio Alarc\'{o}n and Franc Forstneri\v c}
\end{center}

%% Addresses and finantial support
\footnote[0]{\vspace*{-0.4cm}

\noindent A.\ Alarc\'{o}n

\noindent Departamento de Geometr\'{\i}a y Topolog\'{\i}a, Universidad de Granada, E-18071 Granada, Spain.

\noindent e-mail: {\tt alarcon@ugr.es}

\vspace*{0.1cm}

\noindent F.\ Forstneri\v c

\noindent Faculty of Mathematics and Physics, University of Ljubljana, and Institute
of Mathematics, Physics and Mechanics, Jadranska 19, 1000 Ljubljana, Slovenia.

\noindent e-mail: {\tt franc.forstneric@fmf.uni-lj.si}
}

%% Abstract, keywords and MSC
\vspace*{1cm}

\begin{quote}
{\small
\noindent {\bf Abstract}\hspace*{0.1cm} We prove that every bordered Riemann surface admits a complete proper holomorphic immersion into a ball of $\c^2$, and a complete proper holomorphic embedding into a ball of $\c^3$.

\vspace*{0.1cm}

\noindent{\bf Keywords}\hspace*{0.1cm} Riemann surfaces, complex curves, complete holomorphic immersions.

\vspace*{0.1cm}

%\noindent{\bf Mathematics Subject Classification (2010)}\hspace*{0.1cm} 32B15, 32H02, 14H50, 53C42.
\noindent{\bf MSC (2010):}\hspace*{0.1cm} 32B15, 32H02, 14H50, 53C42.}
\end{quote}

%%%%%%%%%%
%%%%%%%%%%
%%%%%%%%%%
%%%%%%%%%%
%%%%%%%%%%
%%%%%%%%%%

\section{Introduction}\label{sec:intro}
The question of whether there exist complete bounded complex submanifolds of a Euclidean space $\c^n$ for $n>1$ was raised by Yang \cite{Yang1,Yang2}. (An immersed submanifold $f\colon M\to \c^n$ is said to be {\em complete} if the induced Riemannian metric $f^* ds^2$ on $M$, obtained by pulling back the Euclidean metric $ds^2$ on $\c^n$ by the immersion, is a complete metric on $M$. A submanifold is bounded if it is contained in a relatively compact subset of the ambient space.) The first important result in this direction was obtained by Jones \cite{Jones} who constructed a holomorphic immersion of the unit complex disc $\d$ into $\c^2$, and an embedding into $\c^3$, with bounded image and complete induced metric. His method is strongly complex analytic and is based on the BMO duality theorem. 

The closely related question on the existence of complete bounded minimal surfaces in $\r^3$ was a classical problem in the theory of minimal surfaces, known as the {\em Calabi-Yau problem}. The first affirmative answer was given by Nadirashvili \cite{Nad}; his method uses Runge's theorem for holomorphic functions on planar labyrinths of compact sets.

No new examples of complete bounded complex submanifolds in $\c^n$ were discovered until recently when Mart\'{i}n et al. \cite{MUY} extended Jones' result to complete bounded complex curves in $\c^2$ with arbitrary finite genus and finitely many ends. Their technique is completely different from the one of Jones and is much more geometric; they used the existence of a simply connected complete bounded null holomorphic curve in $\c^3$ \cite{AL} and modified a technique developed by F.\ J.\ L\'{o}pez \cite{Lopez} for constructing complete minimal surfaces in $\r^3$. 

Recently Alarc\'{o}n and L\'{o}pez \cite{AL} constructed complete bounded holo\-mor\-phic curves in $\c^2$ and null curves in $\c^3$ of arbitrary topological type. Their method also comes from the theory of minimal surfaces in $\r^3$ and null curves in $\c^3$, and it strongly relies on Runge's and Mergelyan's approximation theorems. Their examples have the extra feature that they can be made proper in any convex domain of $\c^2$; in particular, in the unit ball $\b=\{z\in\c^2\colon |z|<1\}$. In the same spirit, Ferrer et al. \cite{FMM} constructed complete minimal surfaces with arbitrary topology and properly immersed in any convex domain of $\r^3$.

Despite their flexibility, none of these methods enables one to control the complex structure on the curve, except of course in the simply connected case when any such curve is biholomorphic to the unit disc. 

The aim of this paper is to construct complete bounded complex curves that are normalized by any given bordered Riemann surface. Our main result is the following.

\begin{theorem}\label{th:intro}
Every bordered Riemann surface admits a complete proper holomorphic immersion to the unit ball of $\c^2$, and a complete proper holomorphic embedding to the unit ball of $\c^3$.
\end{theorem}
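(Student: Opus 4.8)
The plan is to combine the classical structure theory of bordered Riemann surfaces with a Runge–Mergelyan approximation scheme in which one simultaneously pushes the boundary of an exhausting family of subsurfaces toward the unit sphere while inflating the induced metric, keeping both the holomorphicity and the biholomorphism type fixed. First I would recall that a bordered Riemann surface $M$ is biholomorphic to the interior of a compact one-dimensional complex manifold $\overline{M}$ with smooth boundary $bM$ consisting of finitely many Jordan curves, and that $\overline{M}$ embeds as a Runge domain in an open Riemann surface $\mathcal{R}$. Fix a smooth strongly subharmonic exhaustion $\rho\colon M\to[0,1)$ whose sublevel sets $M_t=\{\rho<t\}$ are Runge in $M$ for all regular values $t$; choose an increasing sequence $t_1<t_2<\cdots\to 1$ of regular values, so that $M=\bigcup_j M_{t_j}$. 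Start with any holomorphic map $f_0\colon \overline{M}\to\overline{\b}\subset\c^2$ (e.g.\ a constant near $0$) and build a sequence $f_j\colon \overline{M}\to\c^2$ of holomorphic maps, together with conformal metrics, converging uniformly on compacts to the desired $f\colon M\to\b$.

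The inductive step is the heart of the matter, and has two independent tasks that must be interleaved. \emph{Task A (properness):} given $f_j$ with $f_j(\overline{M_{t_j}})\subset\b$ and $|f_j|>1-1/j$ on the annular shell $\overline{M_{t_j}}\setminus M_{t_{j-1}}$, produce $f_{j+1}$, close to $f_j$ on $\overline{M_{t_{j-1}}}$, with $f_{j+1}(\overline{M_{t_{j+1}}})\subset\b$ and $|f_{j+1}|>1-1/(j+1)$ on $\overline{M_{t_{j+1}}}\setminus M_{t_j}$. This is achieved by the now-standard trick of splitting the shell $\overline{M_{t_{j+1}}}\setminus M_{t_j}$ into finitely many ``arcs'' in the sense of the Mergelyan theorem on Riemann surfaces and, on each piece, adding a controlled holomorphic correction (a function in the image of a suitable linear projection, or a Fatou-type ``bump'') that increases the modulus of one coordinate of $f_j$ on that piece while barely changing it elsewhere; running through all the pieces pushes the image of the new, slightly larger shell outside the sphere of radius $1-1/(j+1)$ while keeping everything inside $\overline{\b}$ after a final rescaling by a factor slightly less than $1$. \emph{Task B (completeness):} in the same step, and again by a Mergelyan approximation supported in the shell, add a further holomorphic tangential correction that makes the $f_{j+1}$-length of every path in $\overline{M_{t_{j+1}}}$ joining $bM_{t_j}$ to $bM_{t_{j+1}}$ exceed $1$; summing the geometric series $\sum 1/j^2$-type lower bounds one gets that any divergent path in $M$ has infinite $f^*ds^2$-length, i.e.\ $f$ is complete. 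Because all corrections are holomorphic on a Runge domain and are taken uniformly small on the previously constructed part, the limit $f=\lim_j f_j$ exists, is holomorphic on $M$, maps into $\overline{\b}$, and in fact into $\b$ since each point lies in some fixed $M_{t_j}$ where the estimates are preserved; properness follows from the shell estimates and completeness from Task B. Finally, to get the embedding into the ball of $\c^3$ one runs the same scheme with target $\c^3$ and, at each step, additionally performs a generic small perturbation of the two-jet to keep $f_{j+1}$ an embedding on $\overline{M_{t_j}}$ (general position: the double-point and ramification sets of a generic holomorphic map of a complex curve into $\c^3$ are empty), so that the limit is a proper complete holomorphic embedding.

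The main obstacle I anticipate is Task A: one must increase the modulus of $f$ on an annular shell, forcing the image toward $\partial\b$, \emph{without} letting it leave $\overline{\b}$ and \emph{without} destroying the smallness of the perturbation on the inner subsurface — the competition between ``push outward near the new boundary'' and ``do nothing inside, and in particular do not overshoot the sphere anywhere'' is delicate, and it is exactly here that one needs the right decomposition of the shell into finitely many Mergelyan pieces together with carefully chosen, nearly orthogonal correction directions and a terminal contraction. Controlling completeness (Task B) is comparatively routine once the approximation machinery is in place, as is the general-position argument for the $\c^3$ embedding; the genuinely new point over the earlier constructions in the literature is that all perturbations are holomorphic on the \emph{fixed} Riemann surface $M$, so the complex structure is never altered.
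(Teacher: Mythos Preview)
Your outline has the right architecture --- an inductive construction on an exhaustion, each step pushing the boundary image toward the sphere while adding intrinsic length --- but the tools you invoke are not strong enough, and this shows up exactly where you expect, in Task~A. Mergelyan (or Runge) approximation on a set of the form $\overline{M_{t_j}}\cup(\text{arcs})$ gives you a holomorphic map close to the prescribed data \emph{on that set}, but says nothing about where the image of the rest of the shell lands in $\c^2$; your ``final rescaling by a factor slightly less than $1$'' is a symptom of this lack of control, and it is precisely what the paper is written to avoid. Indeed, the earlier construction of Alarc\'on--L\'opez used Mergelyan and Runge in essentially the way you describe and was forced to shrink the \emph{domain} (not the target) to recover boundedness, thereby losing the conformal type. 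The paper replaces that step by two devices you do not mention: the Forstneri\v c--Wold method of \emph{exposing boundary points}, which stretches small neighborhoods of finitely many chosen points $p_{i,j}\in b\overline\Rcal$ along prescribed arcs in $\c^2$ so that every curve in $\Rcal$ ending near one of them becomes long; and an approximate solution of a \emph{Riemann--Hilbert boundary value problem} on small discs abutting the remaining boundary arcs, which pushes those arcs toward the target sphere while keeping the whole image inside a thin tube around a prescribed family of analytic discs orthogonal to the radial direction. It is this second tool that supplies the position control Mergelyan lacks.

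Your quantitative bookkeeping is also inconsistent with boundedness. You ask that every path across the $j$-th shell have $f_{j+1}$-length exceeding $1$, with the correction ``tangential''; but a displacement of length $1$ orthogonal to the radius from a point at distance $r<1$ lands at distance $\sqrt{r^2+1}>1$, and no summable sequence of rescalings repairs this (your parenthetical ``$\sum 1/j^2$-type lower bounds'' would, on the other hand, give a convergent sum and hence a finite-length divergent path). The paper's mechanism is a Pythagorean balance built into the main lemma (Lemma~\ref{lem:main}): at step $n$ one adds intrinsic length $\delta_n=c/n$ in directions orthogonal to the current position, so the image radius grows only from $s$ to $\sqrt{s^2+\delta_n^2}$; then $\sum_n\delta_n=\infty$ yields completeness while $\sum_n\delta_n^2<\infty$ makes the radii converge to $1$ (see the choice of $r_n,\rho_n$ in \S\ref{sec:proof}). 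Enforcing this orthogonality at the level of a holomorphic map, not just a model, is exactly what the Riemann--Hilbert step accomplishes. Your general-position argument for the embedding into $\c^3$ is fine once the rest is in place.
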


Theorem \ref{th:intro} is an immediate corollary of the following more precise result.

\begin{theorem} \label{th:intro2} 
Let $\Rcal$ be a bordered Riemann surface. Every holomorphic map $f\colon \overline \Rcal\to\c^n$ $(n>1)$ can be approximated, uniformly on compacta in $\Rcal$, by complete proper holomorphic immersions (embeddings if $n>2$) into any open ball in $\c^n$ containing the image $f(\overline \Rcal)$. 
\end{theorem}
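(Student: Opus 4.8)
\smallskip
\noindent\textbf{Proof proposal.}
After an affine change of coordinates in $\c^n$ I may assume that the ball in the statement is the unit ball $\b$ of $\c^n$ and that $f(\overline{\Rcal})\subset r\b$ for some $r<1$. I would first reduce to the case in which $f$ is already a holomorphic immersion of $\overline{\Rcal}$ into $\b$, and an embedding if $n\ge3$: realizing $\overline{\Rcal}$ as a smoothly bounded bordered domain in an open (hence Stein) Riemann surface, holomorphic maps into $\c^n$ form a rich enough family that a parametric transversality argument removes branch points (expected codimension $n-1$) and, for $n\ge3$, double points (expected codimension $n-2$ in $\Rcal\times\Rcal$) by an arbitrarily small perturbation, kept small enough to stay inside $\b$.

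Next I set up a recursion. Fix $x_0\in\Rcal$ and a normal exhaustion $\Rcal_1\Subset\Rcal_2\Subset\cdots$ of $\Rcal$ by smoothly bounded bordered domains, each containing $x_0$ and a deformation retract of $\Rcal$ (so each $\overline{\Rcal_j}$ is Runge in $\Rcal$). The aim is to construct holomorphic immersions (embeddings if $n\ge3$) $f_j\colon\overline{\Rcal}\to\b$ with $f_1=f$, and reals $\varepsilon_j\searrow0$, so that (a) $\|f_{j+1}-f_j\|_{C^1(\overline{\Rcal_j})}<\varepsilon_{j+1}$; (b) $\dist_{f_j^\ast ds^2}(x_0,b\Rcal_j)>j$; (c) $|f_j|>1-1/j$ on $\overline{\Rcal_j}\setminus\Rcal_{j-1}$; where $\varepsilon_{j+1}$ is fixed only after $f_j$, small against the $C^1$-geometry of $f_j$ on $\overline{\Rcal_j}$, against $1-\max_{\overline{\Rcal_{j-1}}}|f_j|$, and (when $n\ge3$) small enough to preserve injectivity on $\overline{\Rcal_{j-1}}$ in the limit. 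Granting this, one passes to the limit in the usual way: by (a) the $f_j$ converge with all derivatives, uniformly on compacta in $\Rcal$, to a holomorphic map $F\colon\Rcal\to\c^n$; the smallness of the $\varepsilon_j$ forces $F(\Rcal)\subset\b$ and keeps $F$ an immersion, resp.\ an injective immersion, on each $\overline{\Rcal_{j-1}}$, hence on $\Rcal$. For properness one sets, for $p\in\Rcal$, $j(p)=\min\{j:p\in\Rcal_j\}$; then $p\in\overline{\Rcal_{j(p)}}\setminus\Rcal_{j(p)-1}$, so (c) and (a) give $|F(p)|>1-2/j(p)$, and $j(p)\to\infty$ as $p\to b\Rcal$, whence $|F|\to1$ at the boundary and $F\colon\Rcal\to\b$ is proper. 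For completeness, (a) and (b) give $\dist_{F^\ast ds^2}(x_0,b\Rcal_j)>j-1$ for every $j$, and every divergent path in $\Rcal$ eventually exits every $\Rcal_j$, so has infinite $F^\ast ds^2$-length; thus $F^\ast ds^2$ is complete. This reduces the theorem to the inductive step.

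The passage $f_j\mapsto f_{j+1}$ is the crux. One must change $f_j$ by an arbitrarily small $C^1$-amount on $\overline{\Rcal_j}$ while, on the annular region $\overline{\Rcal_{j+1}}\setminus\Rcal_j$, simultaneously (i) raising $|f|$ above $1-1/(j+1)$ and (ii) stretching the induced metric so that every path crossing this annulus acquires length exceeding $1$ (so that, together with (b) at stage $j$, one gets $\dist(x_0,b\Rcal_{j+1})>j+1$), all the while keeping the image in $\b$ and without touching the conformal structure $\Rcal$. I would do this by a Riemann--Hilbert–type deformation for holomorphic maps on bordered Riemann surfaces, of the type developed by Drinovec Drnov\v{s}ek--Forstneri\v{c} and Forstneri\v{c}--Wold: along the finitely many boundary curves of a suitable smoothly bounded subdomain one attaches to $f_j$ a continuous family of small holomorphic discs whose boundaries point essentially radially outward toward $b\b$ — giving the gain in modulus in (i) — and at the same time oscillate rapidly with small amplitude, so that being uniformly small yet of large derivative they stretch the pulled-back metric and give the gain in length in (ii); solving the associated boundary value problem yields a holomorphic map on the subdomain agreeing with $f_j$ up to a controllably small error on a prescribed interior compact and realizing the prescribed boundary behaviour. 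Iterating over a finite chain of subdomains interpolating between $\overline{\Rcal_j}$ and $\overline{\Rcal_{j+1}}$ spreads the two gains over the whole annulus; a Mergelyan approximation extends the result to a holomorphic map on a neighbourhood of $\overline{\Rcal}$; a generic perturbation (codimension count as above) restores the immersion, resp.\ embedding, property without harming (i) or (ii); and a final contraction toward the centre of $\b$ ensures $f_{j+1}(\overline{\Rcal})\subset\b$.

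The main obstacle is exactly this boundary value lemma: choosing the families of discs, and controlling the solution operator, so that modulus growth, metric (length) growth, interior $C^1$-closeness on a given compact, and the ball constraint hold \emph{at once} — the length estimates underlying (ii) being the delicate point. This is where the classical tension between boundedness and metric completeness, familiar from the Calabi--Yau problem, has to be resolved, here under the extra rigidity that the complex structure is prescribed.
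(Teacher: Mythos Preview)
Your global architecture --- normalize to the unit ball, perturb to an immersion, then build a sequence $f_j$ converging in $\Ccal^1$ on compacta with $|f_j|\to 1$ near $b\Rcal$ and $\dist_{f_j}(x_0,b\Rcal_j)\to\infty$ --- is exactly the paper's. The gap is in the inductive step, where the mechanism you propose would not yield the length estimate (ii).

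Discs that ``point essentially radially outward'' are the wrong direction. A radial displacement of size $\delta$ raises $|f|$ by roughly $\delta$; if every boundary-reaching path must gain length at least $1$ at each step, the modulus increments cannot be summable and you leave $\b$. The paper places the Riemann--Hilbert discs in the complex line $\langle f(p_{i,j})\rangle^\bot$ \emph{orthogonal} to the position vector, so that a displacement $\delta$ from radius $s$ lands at radius $\sqrt{s^2+\delta^2}$ (Pythagoras). Thus intrinsic gain $\delta$ costs only $\approx\delta^2/(2s)$ in modulus, and taking $\delta_n=c/n$ gives $\sum\delta_n=\infty$ (completeness) while $r_n^2=r^2+c^2\sum_{k\le n}k^{-2}\nearrow 1$ (properness in $\b$). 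Your bookkeeping with constant gain $1$ per step is incompatible with this, and the alternative you suggest --- the ``rapid oscillation'' of the Riemann--Hilbert solution --- stretches the image of the \emph{boundary curve} (the solution spins like $\zeta\mapsto\zeta^N$) but not of paths from the interior to $b\Rcal$: by the Riemann--Hilbert lemma the image of a radial segment $[r'\zeta,\zeta]$ stays within $\epsilon$ of the single disc $g(\zeta,\overline\d)$, hence has length bounded by that disc's diameter.

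There is a second, separate difficulty your outline does not address. The Riemann--Hilbert modification is supported only on thin neighbourhoods $C_{i,j}$ of subarcs of $b\overline\Rcal$, and between adjacent $C_{i,j}$'s the new map is uniformly close to the old one; a path exiting through such a gap sees no length gain at all. The paper closes these shortcuts by a preliminary step that is \emph{not} a Riemann--Hilbert deformation: the Forstneri\v c--Wold technique of exposing boundary points is used to push finitely many points $p_{i,j}\in b\overline\Rcal$ out along embedded arcs $\lambda_{i,j}\subset\c^n$ that are chosen highly oscillating (so that the projections onto $\span\{f(p_{i,j})\}$ and $\span\{f(p_{i,j+1})\}$ both have length $>\delta$) yet of small diameter (contained in a single chart $\Uscr_{i,j}\cap\Uscr_{i,j+1}$). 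Any path ending near a $p_{i,j}$ must traverse the preimage of such an arc and thereby acquires length $>\delta$; any path ending in the middle of an arc gets length $>\delta$ from the orthogonal Riemann--Hilbert discs via the Pythagorean estimate. You mention Forstneri\v c--Wold only as a source of boundary-value deformations, but its role here --- creating long thin spikes at prescribed boundary points so as to localize the length estimates --- is different and is precisely what prevents the shortcuts that your scheme leaves open.
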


By a minor modification of our proof, one can replace balls by arbitrary convex domains. Our method, coupled with the techniques from \cite{DF}, also yields complete proper holomorphic immersions of bordered Riemann surfaces into any Stein manifold of dimension $>1$; see Theorem \ref{th:Stein} in Sec.\ \ref{sec:Stein}.

Our construction is inspired by that of Alarc\'{o}n and L\'{o}pez \cite{AL}, but we use additional complex analytic tools. The examples in \cite{AL} appear after two different deformation procedures, starting from a given holomorphic immersion $f\colon \overline{\Rcal} \to\c^2$; the first one using Mergelyan's theorem, and the second one Runge's theorem. Since the latter provides no information on the placement in $\c^2$ of some parts of the curve, one must shrink the curve to guarantee its boundedness, thereby losing  control of its complex structure. 
In the present paper we replace Runge's theorem by the solution of certain Riemann-Hilbert boundary value problems. This gives sufficient control of the position of the complex curve in the ambient space to avoid shrinking. The use of the Riemann-Hilbert problem in the construction of proper holomorphic maps has a long history; see the 1992 paper of Forstneri\v c and Globevnik \cite{FG} and the introduction and references in \cite{DF}.

Another important tool in our construction is the method of Forstneri\v c and Wold \cite{FW} for exposing boundary points of a complex curve in $\c^n$. This method is used in the inductive step to insure that any curve terminating near a specific finite set of boundary points of $\Rcal$ becomes sufficiently long. On the boundary arcs separated by these points we apply the Riemann-Hilbert method to insure that any curve ending on such an arc is sufficiently long. In a certain precise sense, the first method enables us to localize the length estimates furnished by the second method, thereby preventing the appearance of any shortcuts in the new complex curve. 

These additions allow us to simplify the construction in \cite{AL} and, what is the main point, to avoid changes of the complex structure on the Riemann surface.

It is classical that any open Riemann surface immerses properly holomorphically into $\c^2$ \cite{Bi,Nar} (see also \cite{AL2}), but it is an open question whether it embeds into $\c^2$ \cite{BN}. Forstneri\v c and Wold \cite{FW} gave an affirmative answer for bordered Riemann surfaces whose closures embed holomorphically into $\c^2$, and for all circular domains in $\c$ without punctures \cite{FW2}. (See also \cite{Majcen}.) Since the immersions into $\c^2$ constructed in the present paper may have self-intersections, the following problem remains open.

\begin{question}
Does there exist a complete (properly) embedded complex curve in a ball of $\c^2$?
\end{question}

As we have already mentioned, there exist complete bounded immersed minimal surfaces and null curves with arbitrary topology \cite{AL,FMM}. The first examples of complete properly {\em embedded null curves} with arbitrary topology in a ball of $\c^3$ have been constructed recently in \cite[Corollary 6.2]{AF}. However, the methods in those papers do not enable one to control the complex structure on these surfaces.  In the light of Theorem \ref{th:intro}, the following is therefore a natural question.

\begin{question}
Does every bordered Riemann surface admit a complete conformal minimal immersion into a ball of $\r^3$? Even more, does every such surface admit a complete null holomorphic immersion (or embedding) into a ball of $\c^3$?
\end{question}

By using the theorem of Jones \cite{Jones} one also finds examples of bounded complete complex submanifolds of arbitrary dimension in $\c^N$ for large $N$. In particular, we have the following corollary to Theorem \ref{th:intro}; we wish to thank J.\ E.\ Forn\ae ss for this observation. Let $\mathbb{B}^k$ denote the unit ball of $\c^k$ and $(\mathbb{B}^k)^l \subset \c^{kl}$ the Cartesian product of $l$ copies of $\mathbb{B}^k$.

\begin{corollary}
\label{cor:higher}
Every relatively compact, strongly pseudoconvex domain in a Stein manifold of dimension $n$ admits a complete proper holomorphic immersion into $(\mathbb{B}^2)^{2n} \subset \c^{4n}$, and a complete proper holomorphic embedding into 
$(\mathbb{B}^3)^{2n+1} \subset \c^{6n+3}$. 
\end{corollary}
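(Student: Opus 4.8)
The idea is to combine Theorem \ref{th:intro} with a theorem of Jones, via a suitable embedding of the strongly pseudoconvex domain into a product of copies of the disc. First I would recall that a relatively compact, strongly pseudoconvex domain $D$ in an $n$-dimensional Stein manifold is itself a Stein manifold with strongly pseudoconvex boundary; by smoothing corners if necessary and by the classical embedding theory of Stein spaces, $D$ can be viewed (up to biholomorphism) as a bounded strongly pseudoconvex domain in $\c^N$ for some $N$. More to the point, I claim that $D$ admits a proper holomorphic embedding into a polydisc, equivalently into a product $\d^{\,k}$ of discs, for suitable $k$; indeed, after shrinking and rescaling, each of the $n$ ``complex directions'' needed to embed $D$ properly can be arranged to take values in a bounded domain, and a bounded domain in $\c$ embeds properly and holomorphically into $\d$ by the Riemann mapping theorem applied componentwise — more carefully, one uses that $D$, being a relatively compact strongly pseudoconvex domain in an $n$-dimensional Stein manifold, admits a proper holomorphic embedding into $\c^{2n+1}$ (by Narasimhan/Bishop-type results, with the bound $2n+1$ from Eliashberg--Gromov/Sch\"urmann), and then precomposes each coordinate with a Riemann map onto $\d$ of a disc containing its (bounded) image, giving a proper holomorphic embedding $D \hookrightarrow \d^{\,2n+1}$. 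For an immersion one similarly gets $D \hookrightarrow \d^{\,2n}$ since a proper holomorphic immersion of $D$ into $\c^{2n}$ exists.

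Next I would invoke Theorem \ref{th:intro}: the disc $\d$ is itself a bordered Riemann surface, so it admits a complete proper holomorphic immersion $\phi_2\colon \d \to \b^2$ into the unit ball of $\c^2$, and a complete proper holomorphic embedding $\phi_3\colon \d \to \b^3$ into the unit ball of $\c^3$. Taking the $l$-fold product map $\Phi = \phi_2 \times \cdots \times \phi_2 \colon \d^{\,l} \to (\b^2)^l \subset \c^{2l}$ (respectively $\phi_3 \times \cdots \times \phi_3 \colon \d^{\,l} \to (\b^3)^l \subset \c^{3l}$) yields a complete proper holomorphic immersion (resp. embedding) of the polydisc $\d^{\,l}$ into $(\b^2)^l$ (resp. $(\b^3)^l$): properness is clear since a product of proper maps is proper, the product map is an immersion (resp. embedding) because each factor is, and completeness follows because the pulled-back metric dominates the sum of the pulled-back metrics of the factors, each of which is complete on $\d$, so any divergent path in $\d^{\,l}$ has a coordinate projection of infinite length.

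Finally I would compose: with $l = 2n+1$ and the proper embedding $\iota\colon D \hookrightarrow \d^{\,2n+1}$ from the first step, the map $\Phi \circ \iota \colon D \to (\b^3)^{2n+1} \subset \c^{6n+3}$ is a proper holomorphic embedding, and it is complete because $\iota$ is proper — hence distance-nondecreasing up to a positive constant on compacta, and more precisely because a divergent path in $D$ maps under the proper embedding $\iota$ to a divergent path in $\d^{\,2n+1}$, which then has infinite $\Phi$-length. Likewise, with $l = 2n$ and the proper immersion $D \to \d^{\,2n}$, composition with the product of the $\phi_2$'s gives a complete proper holomorphic immersion $D \to (\b^2)^{2n} \subset \c^{4n}$.

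I expect the main obstacle to be the completeness of the composition in the last step: one must check that properness of $\iota$ together with completeness of the metric on the polydisc forces the pulled-back metric on $D$ to be complete, i.e. that no finite-length divergent path in $D$ can exist. This is where the argument must be made carefully — a divergent path $\gamma$ in $D$ leaves every compact subset, so by properness $\iota\circ\gamma$ leaves every compact subset of $\d^{\,2n+1}$, hence has infinite length in the complete product metric, hence $\gamma$ has infinite length in the pulled-back metric. The remaining points (existence of the proper embedding of $D$ into a polydisc with the stated dimension count, and the elementary fact that products of proper immersions/embeddings with complete factors are complete proper immersions/embeddings) are standard.
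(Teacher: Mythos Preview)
Your overall strategy matches the paper's exactly: obtain a proper holomorphic immersion (resp.\ embedding) of $D$ into the polydisc $\d^{2n}$ (resp.\ $\d^{2n+1}$), then post-compose with the product of copies of the complete proper map $\d\to\b^2$ (resp.\ $\d\to\b^3$) furnished by Theorem~\ref{th:intro}. The product and composition steps, including your verification of completeness via divergent paths, are correct and in fact more explicit than the paper's ``easily verified''.

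The gap is in your first step. Your argument for the proper embedding $D\hookrightarrow \d^{2n+1}$ does not work: a proper holomorphic embedding of the (non-compact) Stein manifold $D$ into $\c^{2n+1}$ necessarily has \emph{unbounded} image, so there is no disc containing the image of any coordinate function, and the Riemann-map trick cannot be applied. Conversely, if you instead restrict to $D$ an embedding of the ambient Stein manifold into some $\c^N$, the image is bounded but the resulting map is no longer proper (after rescaling, $bD$ lands in the interior of $\d^N$), and composing coordinatewise with biholomorphisms of discs cannot repair this. The existence of a proper holomorphic immersion $D\to\d^{2n}$ and embedding $D\hookrightarrow\d^{2n+1}$ for a relatively compact strongly pseudoconvex domain is a genuine theorem, requiring the same circle of Riemann--Hilbert ideas as the present paper; the paper invokes \cite{DF2010} for precisely this fact (with \cite{Dor1995,Low2,Stensones1989} cited as related results), rather than attempting to derive it from the classical Stein embedding theorem.
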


\begin{proof}
Denote by $\mathbb{D}^k$ the unit polydisc in $\c^k$. According to \cite{DF2010}, any relatively compact strongly pseudoconvex domain $D$ in a Stein manifold of dimension $n$ admits a proper holomorphic immersion $g\colon D\to \mathbb{D}^{2n}$ and a proper holomorphic embedding $h\colon D\hookrightarrow \mathbb{D}^{2n+1}$. (For similar results see also the papers \cite{Dor1995,Low2,Stensones1989}.) Theorem \ref{th:intro} furnishes a proper complete holomorphic immersion $\xi\colon \mathbb{D}\to \mathbb{B}^2$ of the disc into the ball of $\c^2$, and a proper complete holomorphic embedding $\eta \colon \mathbb{D}\to \mathbb{B}^3$ into the ball of $\c^3$. For any integer $k\in\mathbb{N}$ let $\xi^k\colon \mathbb{D}^k \to \c^{2k}$ be defined by 
\[
	\xi^k(z_1,z_2,\ldots,z_k)=(\xi(z_1),\xi(z_2),\ldots, \xi(z_k)),\quad (z_1,\ldots,z_k)\in \mathbb{D}^k.  
\]
Clearly $\xi^k$ immerses $\mathbb{D}^k$ properly into $(\mathbb{B}^2)^k \subset \c^{2k}$. Similarly we define the map $\eta^k\colon \mathbb{D}^k \to \c^{3k}$ which embeds $\mathbb{D}^k$ properly into $(\mathbb{B}^3)^k$. It is now easily verified that the maps $\xi^{2n} \circ g \colon D \to (\mathbb{B}^2)^{2n} \subset \c^{4n}$ and $\eta^{2n+1}\circ h\colon D \to (\mathbb{B}^3)^{2n+1}  \subset \c^{6n+3}$ satisfy the conclusion of the corollary.
\end{proof}

The dimensions in Corollary \ref{cor:higher} are probably far from optimal. It would be interesting to answer the following question.

\begin{question}
Let $n>1$. Does the ball $\mathbb{B}^n\subset \c^n$ admit a proper complete holomorphic immersion (or embedding) into the ball $\mathbb{B}^N$ for some $N>n$? What is the answer if $n=2$ and $N=3,4,\ldots$?
\end{question}

The paper is organized as follows. In Sec.\ \ref{sec:preli} we collect the technical tools. The central part of the paper is Sec.\ \ref{sec:mainlemma} where we state and prove the main technical result, Lemma  \ref{lem:main}. Theorem \ref{th:intro2} is proved in Sec.\ \ref{sec:proof} by a recursive application of Lemma  \ref{lem:main}. In Sec.\ \ref{sec:Stein} we outline the construction of proper complete holomorphic immersions of bordered Riemann surfaces to an arbitrary Stein manifold of dimension $>1$.

%%%%%%%%%%
%%%%%%%%%%
%%%%%%%%%%
%%%%%%%%%%
%%%%%%%%%%
%%%%%%%%%%

\section{Preliminaries}\label{sec:preli}

We denote by $\langle\cdot,\cdot\rangle$, $|\cdot|$, $\dist(\cdot,\cdot)$, and $\length(\cdot)$, respectively, the hermitian inner product, norm, distance, and length on $\c^n$. Hence $\Re \langle\cdotp,\cdotp\rangle$ is the Eulidean inner product on $\r^{2n}\cong\c^n$. 

Given $u\in\c^n$, set $\langle u\rangle^\bot=\{v\in\c^n\colon \langle u,v\rangle=0\}$ and $\span\{u\}=\{\zeta u\colon \zeta\in\c\}$. If $u\neq 0\in\c^n$, then $\langle u\rangle^\bot$ is a complex hyperplane in $\c^n$.

For any compact topological space $K$ and continuous map $f\colon K\to\c^n$, denote by $\|f\|_{0,K}=\sup_{x\in K} |f(x)|$ the maximum norm of $f$ on $K$. If $K$ is a subset of a smooth manifold $M$ and $f$ is of class $\Ccal^r$ for some $r\in\z_+=\n\cup\{0\}$, we denote by $\|f\|_{r,K}$ the $\Ccal^r$-maximum norm of $f$ on $K$, measured with respect the expression of $f$ in a system of local coordinates in some fixed finite open cover of $K$. Similarly we define these norms for maps $M\to X$ to a smooth manifold $X$ by using a fixed cover by coordinate patches on both manifolds. 

Given a topological surface $M$ with boundary, we denote by $bM$ the $1$-dimensional topological manifold determined by the boundary points of $M$. By a {\em domain} in $M$ we mean an open connected subset of $M\setminus  bM$. A surface is said to be {\em open} if it is non-compact and does not contain any boundary points. A {\em Riemann surface} is an oriented surface together with the choice of a complex structure.

\begin{definition}
An open connected Riemann surface, $\Rcal$, is said to be a {\em bordered Riemann surface} if it is the interior of a compact one dimensional complex manifold, $\overline{\Rcal}$, with smooth boundary $b\overline{\Rcal}$ consisting of fini\-tely many closed Jordan curves.
\end{definition}

A domain in a bordered Riemann surface $\Rcal$ is said to be a {\em bordered domain} if it is a bordered Riemann surface, hence with smooth boundary. It is classical that every bordered Riemann surface is biholomorphic to a relatively compact bordered domain in a larger Riemann surface $\widehat \Rcal$ which can be chosen either open or compact.  

Denote by $\Bscr(\Rcal)$ the family of bordered domains $\Mcal\Subset\Rcal$ such that $\Rcal$ is a tubular neighborhood of $\overline{\Mcal}$; that is to say, $\overline{\Mcal}$ is a Runge subset of $\Rcal$ and $\Rcal\setminus  \overline{\Mcal}$ consists of finitely many open annuli. 

Let $\Rcal$ be a bordered Riemann surface and $X$ be a complex manifold. Given a number $r\ge 0$, we denote by $\Ascr^r(\Rcal,X)$ the set of all maps $f\colon \overline \Rcal\to X$ of class $\Ccal^r$ that are holomorphic on $\Rcal$. (This space is defined also for non-integer values of $r$ by H\"older continuity.) For $r=0$ we write $\Ascr^0(\d,X)=\Ascr(\d,X)$. When $X=\c$, we write $\Ascr^r(\Rcal,\c) = \Ascr^r(\Rcal)$ and $\Ascr^0(\Rcal) = \Ascr(\Rcal)$. Note that $\Ascr^r(\Rcal,\c^n) = \Ascr^r(\Rcal)^n$ is a complex Banach space. For any complex manifold $X$ and any number $r\ge 0$, the space $\Ascr^r(\Rcal,X)$ carries a natural structure of a complex Banach manifold \cite{F2007}. 
 
Let $\Iscr(\Rcal,\c^n)$ denote the set of all $\Ccal^1$ immersions $\overline{\Rcal}\to\c^n$ that are holomorphic in the interior. We shall also write $\Iscr(\Rcal)$ when the target is clear from the context. 

Given an immersion $f\colon \Rcal\to\c^n$, we denote by $\sigma_f^2$ the Riemannian metric in $\Rcal$ induced by the Euclidean metric of $\c^n$ via $f$; that is, 
\[
		\sigma_f^2(p,v) = \langle df_p(v), df_p(v)\rangle, \quad p\in \Rcal,\ v\in T_p\Rcal.
\]
If $f$ is holomorphic, then $\sigma_f^2$ is a conformal metric on $\Rcal$. We denote by $\dist_{(\Rcal,f)}(\cdot,\cdot)$ the distance function in the Riemannian surface $(\Rcal,\sigma_f^2)$. 

A curve $\gamma\colon [0,1)\to \Rcal$ is said to be {\em divergent} if the map $\gamma$ is proper; that is, if the point $\gamma(t)$ leaves any compact subset of $\Rcal$ when $t\to 1$.

\begin{definition}
\label{def:complete_imm}
An immersion $f\colon \Rcal\to\c^n$ is said to be {\em complete} if $(\Rcal,\sigma_f^2)$ is complete as a Riemannian surface; that is to say, if the image curve $f\circ \gamma$ in $\c^n$ has infinite length for any connected divergent curve $\gamma$ in $\Rcal$. 
\end{definition}

%%%%%%%%%%
%%%%%%%%%%
%%%%%%%%%%
%%%%%%%%%%
%%%%%%%%%%
%%%%%%%%%%

\subsection{A Riemann-Hilbert problem}\label{sec:RH}

We shall need approximate solutions of certain Riemann-Hilbert problems over the unit disc $\d=\{\zeta\in\c\colon |\zeta|<1\}$.  Such results have been used by several authors; the precise version of the following lemma is taken from the papers \cite{DF,DF2}.

\begin{lemma} 
\label{lem:Hilbert}
Fix an integer $n\in\n$. Let $f\in\Ascr(\d)^n$, and let $g\colon b\overline{\d}\times\overline{\d}\to\c^n$ be a continuous map such that for every fixed $\zeta\in b\overline{\d}$ we have $g(\zeta,\cdot)\in \Ascr(\d)^n$ and $g(\zeta,0)=f(\zeta)$. Given numbers $\epsilon>0$ and $0<r<1$, there exist a number $r'\in [r,1)$ and a map $h\in\Ascr(\d)^n$ satisfying the following conditions:
\begin{itemize}
\item $\dist(h(\zeta),g(\zeta,b\overline{\d}))<\epsilon$ for all $\zeta\in b\overline{\d}$,
\item $\dist(h(\rho\zeta),g(\zeta,\overline{\d}))<\epsilon$ for all $\zeta\in b\overline{\d}$ and $\rho\in [r',1)$, and
\item $|h(\zeta)-f(\zeta)|<\epsilon$ when $|\zeta|\leq r'$.
\end{itemize}

In particular, if $J$ is a compact arc in the circle $ b\overline{\d}$ and $g(\zeta,\cdot)=f(\zeta)$ is the constant disc for all points $\zeta\in b\overline{\d}\setminus  J$, then for any open neighborhood $C$ of $J$ in $\overline{\d}$ one can choose $h$ such that $\|h-f\|_{0,\overline{\d}\setminus  C}<\epsilon$.
\end{lemma}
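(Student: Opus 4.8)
The plan is to produce $h$ by an explicit formula coming from the classical substitution $w\mapsto\zeta^N$, with $N$ a large integer, applied to a Fej\'er (Ces\`aro) truncation of $g$ in the angular variable. I would first replace $g$ by the Ces\`aro mean of its angular Fourier series,
\[
	G_M(\zeta,w)=\sum_{|j|\le M}\sigma_{M,j}\,a_j(w)\,\zeta^j,\qquad a_j(w)=\frac{1}{2\pi}\int_0^{2\pi}g(e^{i\phi},w)\,e^{-ij\phi}\,d\phi,\quad \sigma_{M,j}=1-\frac{|j|}{M+1}.
\]
Each $a_j$ lies in $\Ascr(\d)^n$; by Fej\'er's theorem and the uniform continuity of $g$ on the compact set $b\overline{\d}\times\overline{\d}$ one has $G_M\to g$ uniformly there as $M\to\infty$, and likewise $f_M(\zeta):=G_M(\zeta,0)\to f$ uniformly on $\overline{\d}$. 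The key point, where the hypotheses $f\in\Ascr(\d)^n$ and $g(\zeta,0)=f(\zeta)$ enter, is that the negative Fourier coefficients of $f$ vanish, i.e. $a_j(0)=0$ for all $j<0$ (Cauchy's theorem). I would fix $M$ so large that $\|f_M-f\|_{0,\overline{\d}}$ and $\|G_M-g\|_{0,b\overline{\d}\times\overline{\d}}$ are both much smaller than $\epsilon$.

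For $1\le l\le M$ write $a_{-l}(w)=w\,b_l(w)$ with $b_l\in\Ascr(\d)^n$ (the singularity at $0$ is removable since $a_{-l}(0)=0$), and define, for an integer $N>M$ to be chosen last,
\[
	h(\zeta):=\sum_{0\le j\le M}\sigma_{M,j}\,a_j(\zeta^N)\,\zeta^j\;+\;\sum_{1\le l\le M}\sigma_{M,-l}\,\zeta^{N-l}\,b_l(\zeta^N),\qquad \zeta\in\overline{\d}.
\]
Since $\zeta\mapsto\zeta^N$ carries $\overline{\d}$ into itself and $N-l\ge0$, every summand is in $\Ascr(\d)^n$, hence $h\in\Ascr(\d)^n$; and since $\zeta^{N-l}b_l(\zeta^N)=\zeta^{-l}a_{-l}(\zeta^N)$ for $\zeta\ne0$, one has $h(\zeta)=G_M(\zeta,\zeta^N)$ off the origin. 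This is the heart of the matter: the substitution $w=\zeta^N$ turns the Laurent polynomial $G_M$ into a genuine element of $\Ascr(\d)^n$, precisely because its negative-degree coefficients vanish at $0$.

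It then remains to verify the three estimates, choosing the remaining constants in the right order. With $M$ fixed, let $L$ bound $|\partial_\zeta G_M(\zeta,w)|$ over $\tfrac{1}{2}\le|\zeta|\le1$, $w\in\overline{\d}$, pick $r'\in[\max(r,\tfrac{1}{2}),1)$ so close to $1$ that $L(1-r')$ and the oscillation of $f$ over distances $\le1-r'$ are $\ll\epsilon$, and then pick $N>M$ so large that $(r')^{N-M}$ and $\sup_{|w|\le(r')^N}|a_j(w)-a_j(0)|$ (for $|j|\le M$) are $\ll\epsilon$. For $\zeta\in b\overline{\d}$ we have $\zeta^N\in b\overline{\d}$, so $h(\zeta)=G_M(\zeta,\zeta^N)$ is $\epsilon$-close to $g(\zeta,\zeta^N)\in g(\zeta,b\overline{\d})$, which gives the first item. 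For $\rho\in[r',1)$ and $\zeta\in b\overline{\d}$, $h(\rho\zeta)=G_M(\rho\zeta,\rho^N\zeta^N)$ is close to $G_M(\zeta,\rho^N\zeta^N)$ (choice of $r'$) and hence to $g(\zeta,\rho^N\zeta^N)\in g(\zeta,\overline{\d})$ since $|\rho^N\zeta^N|<1$; this gives the second item. For $|\zeta|\le r'$, $|\zeta^N|\le(r')^N$ forces the second sum in the definition of $h$ to be small and the first to be close to $f_M(\zeta)$, so $h(\zeta)$ is $\epsilon$-close to $f(\zeta)$, the third item. Finally, in the situation $g(\zeta,\cdot)\equiv f(\zeta)$ for $\zeta\in b\overline{\d}\setminus J$, I would shrink the given neighborhood $C\supset J$ to a cap $\{|\zeta|>1-\delta_0,\ \arg\zeta\in J'\}$ with $J'\supset J$ and take $r'>1-\delta_0$; then any $\zeta\in\overline{\d}\setminus C$ either has $|\zeta|\le r'$ (covered above) or has radial projection $\zeta_0\in b\overline{\d}\setminus J$, whence $h(\zeta)=G_M(\zeta,\zeta^N)$ is close to $g(\zeta_0,\zeta^N)=f(\zeta_0)$ and so to $f(\zeta)$; in both cases $\|h-f\|_{0,\overline{\d}\setminus C}<\epsilon$. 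I expect the only real difficulty to be the bookkeeping of this order of choices ($M$ before $r'$, since the modulus of continuity of $G_M$ in $\zeta$ grows with $M$; $N$ last), together with the verification that the $\zeta^N$-substitution indeed lands in $\Ascr(\d)^n$ — which, as noted, is exactly the step that uses the standing hypotheses on $f$ and $g$.
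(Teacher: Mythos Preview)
The paper does not prove this lemma; it is quoted from \cite{DF,DF2}, accompanied only by the remark that ``the proof of the lemma amounts to a reduction to this model situation'' (namely $h(\zeta)=(\zeta,\zeta^N)$). Your argument is correct and is precisely an explicit, self-contained implementation of that reduction: Fej\'er--truncate $g$ in the angular variable so that it becomes a Laurent polynomial in $\zeta$ with $\Ascr(\d)^n$-valued coefficients, then substitute $w=\zeta^N$; the substitution lands in $\Ascr(\d)^n$ because the hypothesis $g(\zeta,0)=f(\zeta)\in\Ascr(\d)^n$ forces the negative-index coefficients $a_{-l}$ to vanish at $0$, so that $\zeta^{-l}a_{-l}(\zeta^N)=\zeta^{N-l}b_l(\zeta^N)$ is holomorphic across the origin. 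This is essentially the standard proof in the cited sources; your order of choices ($M$ first, then $r'$ using the Lipschitz bound of $G_M$ on the annulus $\{\tfrac12\le|\zeta|\le1\}$, then $N$ last) is the correct one, and the addendum for the arc $J$ follows as you indicate once one fits a radial--angular cap around $J$ inside $C$ and takes $r'>1-\delta_0$.
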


A characteristic feature of the map $h$ in the above lemma is that its boundary values must spin very fast in a small neighborhood of the torus in $\c^n$ that is formed by the boundaries of the analytic discs $g(\zeta,\cdotp)$ for $|\zeta|=1$. In the model situation in $\c^2$, with $f(\zeta)=(\zeta,0)$ and $g(\zeta,z)=(\zeta,z)$ for $|\zeta|=1$ and $|z|\le 1$, a suitable (exact!) solution is the map $h(\zeta)=(\zeta,\zeta^N)$ for large values of $N\in \n$. Indeed, the proof of the lemma amounts to a reduction to this model situation.

%%%%%%%%%%
%%%%%%%%%%
%%%%%%%%%%
%%%%%%%%%%
%%%%%%%%%%
%%%%%%%%%%

\subsection{A gluing lemma}\label{sec:gluing}

An important technical step in our construction is to glue pairs of holo\-mor\-phic mappings on special geometric configurations in a Riemann surface. We first explain this gluing for maps to $\c^n$; in the next subsection we address the correspoding problem for maps to an arbitrary complex manifold. (The general case is only used in Sec.\ \ref{sec:Stein}.)

\begin{definition}
\label{def:Cartan}
A {\em Cartan decomposition} of a bordered Riemann surface $\overline \Rcal$ is a pair $(A,B)$ of compact domains with smooth boundaries in $\overline\Rcal$ such that $C:=A\cap B$ also has smooth boundary, and we have
\[
	A\cup B =\overline \Rcal,\qquad \overline{A\setminus  B}\cap \overline{B\setminus  A} =\emptyset. 
\]
\end{definition}

Compare with the notion of a {\em Cartan pair} (Chapter 5 in \cite{F2011}). In applications in this paper, one of the two sets $A,B$ will be a disjoint union of finitely many discs. 

The following well known lemma is a solution of the Cousin-I problem with bounds; c.f.\ Lemma 5.8.2 in \cite{F2011}.
The operators $\Acal$ and $\Bcal$ are obtained by using a smooth cut-off function for the pair $(A,B)$ and a bounded solution operator for the $\overline\partial$-problem on $\overline \Rcal$.

%
%    LEMMA ON LINEAR SPLITTING
%
%
\begin{lemma}
\label{linear-splitting}
Let $A\cup B =\overline \Rcal$ be a Cartan decomposition of a bordered Riemann surface $\overline \Rcal$. Set $C:=A\cap B$. For every integer $r\in\z_+$ there exist bounded linear operators $\Acal \colon \Ascr^r(C) \to \Ascr^r(A)$, $\Bcal \colon \Ascr^r(C) \to \Ascr^r(B)$, satisfying the condition
\[ 
        c = \Acal c - \Bcal c \qquad \forall c\in \Ascr^r(C).
\]
\end{lemma}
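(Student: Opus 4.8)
The statement is the classical Cousin-I problem with bounds, and the standard route is the $\overline\partial$-correction of a naive smooth splitting. First I would fix a smooth cut-off function $\chi\in\Ccal^\infty(\overline\Rcal,[0,1])$ that equals $1$ on a neighborhood of $\overline{A\setminus B}$ and $0$ on a neighborhood of $\overline{B\setminus A}$; such a $\chi$ exists because the Cartan condition $\overline{A\setminus B}\cap\overline{B\setminus A}=\emptyset$ provides disjoint closed sets to separate. Given $c\in\Ascr^r(C)$, the function $\chi c$, extended by $0$, is a $\Ccal^r$ function on $B$, while $(\chi-1)c$, extended by $0$, is a $\Ccal^r$ function on $A$; on the overlap $C$ their difference is exactly $c$. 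These naive pieces are not holomorphic, so one computes $\overline\partial(\chi c)=c\,\overline\partial\chi$ on $C$; note $\overline\partial\chi$ is supported in $C$ and vanishes near $bC\cap b\overline\Rcal$, so $v:=c\,\overline\partial\chi$, extended by $0$, is a $\Ccal^{r-1}$ (in fact smooth outside the support of $c$, but it suffices to take $c$ holomorphic to keep $v\in\Ccal^r$ wherever needed) $(0,1)$-form on all of $\overline\Rcal$, depending linearly and boundedly on $c$.

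Next I would invoke a bounded linear solution operator $T$ for the $\overline\partial$-equation on $\overline\Rcal$: since $\overline\Rcal$ is a compact bordered Riemann surface (equivalently, a relatively compact smoothly bounded domain in a larger open Riemann surface $\widehat\Rcal$), there is a bounded linear operator $T\colon\Ccal^r_{(0,1)}(\overline\Rcal)\to\Ccal^r(\overline\Rcal)$ with $\overline\partial(Tv)=v$; this is standard elliptic regularity for $\overline\partial$ on such domains, and the bound is with respect to the $\Ccal^r$-norms fixed in the preliminaries. Set $u:=Tv\in\Ascr^r$-type function — more precisely $u\in\Ccal^r(\overline\Rcal)$ — and define
\[
	\Acal c := (\chi-1)c + u\big|_A,\qquad \Bcal c := \chi c + u\big|_B,
\]
where again $(\chi-1)c$ and $\chi c$ are understood as extended by $0$ outside $C$. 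Then on $A$ we have $\overline\partial\big((\chi-1)c\big)=c\,\overline\partial\chi=v$ (interpreting the support correctly: $\overline\partial(\chi c)=v$ on the interior and $(\chi-1)c$ differs from $\chi c$ by the holomorphic $-c$ on $C$ and by $0$ elsewhere), hence $\overline\partial(\Acal c)=v-v=0$, so $\Acal c\in\Ascr^r(A)$; symmetrically $\Bcal c\in\Ascr^r(B)$. The difference is $\Acal c-\Bcal c=(\chi-1)c-\chi c=-c$ on... — here I must be careful with the sign convention: with the extensions as above one gets $\Acal c-\Bcal c = c$ on $C$ exactly, matching the lemma's normalization (if a sign is off, swap the roles of $\chi$ and $\chi-1$). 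Boundedness of $\Acal,\Bcal$ is immediate: $c\mapsto v$ is bounded $\Ascr^r(C)\to\Ccal^r_{(0,1)}(\overline\Rcal)$ (multiplication by the fixed smooth form $\overline\partial\chi$ plus a trivial extension), $T$ is bounded, restriction is bounded, and multiplication by the fixed functions $\chi,\chi-1$ is bounded.

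The only genuine subtlety — and the step I expect to need the most care — is the \emph{regularity at the corners and along $b\overline\Rcal$}: one must check that $v=c\,\overline\partial\chi$, extended by zero, is honestly of class $\Ccal^{r}$ (or at least $\Ccal^{r-1}$, with a suitably interpreted regularity statement) as a form on the closed surface $\overline\Rcal$, and that the $\overline\partial$-solution operator $T$ maps $\Ccal^r$-forms to $\Ccal^r$-functions up to the boundary. The first point follows because $\overline\partial\chi$ is supported in the interior of $C$ away from $b\overline\Rcal$ near the ``bad'' parts, and $c$ is only assumed $\Ccal^r$ on $\overline C$; one typically arranges $\chi$ so that $\mathrm{supp}(\overline\partial\chi)$ is a compact subset of the interior where $c$ is genuinely holomorphic, so $v$ is smooth there and trivially extends. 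The second point is the classical boundary regularity of $\overline\partial$ on smoothly bounded domains in open Riemann surfaces, which is exactly the ingredient referenced as Lemma 5.8.2 in \cite{F2011}; I would simply cite it. Everything else is bookkeeping.
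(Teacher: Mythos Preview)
Your approach is exactly the one the paper indicates: it does not give a detailed proof but simply states that ``the operators $\Acal$ and $\Bcal$ are obtained by using a smooth cut-off function for the pair $(A,B)$ and a bounded solution operator for the $\overline\partial$-problem on $\overline\Rcal$,'' citing Lemma~5.8.2 in \cite{F2011}. Apart from the sign bookkeeping you yourself flag (with your conventions one should take $\Acal c=(\chi-1)c-u$, $\Bcal c=\chi c-u$, and then swap $\chi\leftrightarrow 1-\chi$ to get $\Acal c-\Bcal c=c$), and the observation that $\mathrm{supp}(\overline\partial\chi)$ may meet $b\overline\Rcal\cap C$ but this is harmless since $c$ is $\Ccal^r$ there, your plan is correct and complete.
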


\begin{corollary}
\label{linear-gluing}
{\rm (Assumptions as in Lemma \ref{linear-splitting}.)} For every pair of integers $n\in \n$ and $r\in\z_+$ there is a constant $M=M_{n,r}>0$ with the following property. Given a pair of mappings $f\in \Ascr^r(A)^n$, $g\in \Ascr^r(B)^n$, there exists $F\in \Ascr^r(\overline \Rcal)^n$ such that
\[
	||F-f||_{r,A} \le M ||f-g||_{r,C},\qquad ||F-g||_{r,B} \le M ||f-g||_{r,C}.
\]
\end{corollary}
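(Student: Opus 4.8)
The plan is to deduce Corollary \ref{linear-gluing} directly from the Cousin-I splitting in Lemma \ref{linear-splitting}. The difference $c := f - g \in \Ascr^r(C)^n$ measures the failure of $f$ and $g$ to agree on the overlap $C = A\cap B$. Applying the bounded linear operators $\Acal,\Bcal$ componentwise to $c$, we obtain $\Acal c \in \Ascr^r(A)^n$ and $\Bcal c \in \Ascr^r(B)^n$ with $\Acal c - \Bcal c = c = f - g$ on $C$. Rearranging, $f - \Acal c = g - \Bcal c$ on $C$, so the two sides glue to a well-defined map $F$ on $A \cup B = \overline\Rcal$; since $\overline{A\setminus B}$ and $\overline{B\setminus A}$ are disjoint and both pieces are of class $\Ccal^r$ (holomorphic in the interior), $F \in \Ascr^r(\overline\Rcal)^n$.

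It remains to record the estimates. By construction $F - f = -\Acal c$ on $A$ and $F - g = -\Bcal c$ on $B$. Hence
\[
    \|F - f\|_{r,A} = \|\Acal c\|_{r,A} \le \|\Acal\|\, \|c\|_{r,C}, \qquad
    \|F - g\|_{r,B} = \|\Bcal c\|_{r,B} \le \|\Bcal\|\, \|c\|_{r,C},
\]
where $\|\Acal\|,\|\Bcal\|$ denote the operator norms of $\Acal,\Bcal$ acting on a single scalar component; applying them coordinatewise to the $n$-tuple $c$ costs at most a factor depending on $n$ (e.g.\ a factor $\sqrt{n}$ or $n$, depending on the chosen norm on $\c^n$). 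Setting $M = M_{n,r} := C_n \max\{\|\Acal\|,\|\Bcal\|\}$ for the appropriate dimensional constant $C_n$, and noting $\|c\|_{r,C} = \|f - g\|_{r,C}$, gives both claimed inequalities.

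I do not anticipate a genuine obstacle here: the statement is a formal corollary of the bounded Cousin-I solution, and the only mild point of care is the bookkeeping of how the scalar operator norms in Lemma \ref{linear-splitting} combine when applied to $\c^n$-valued maps, which merely affects the dimensional constant $M_{n,r}$ and not the structure of the argument. The one thing worth stating explicitly in the write-up is that $F$ is well defined and has the required regularity, which relies precisely on the two defining properties of a Cartan decomposition: that $A$ and $B$ cover $\overline\Rcal$, and that $\overline{A\setminus B}\cap\overline{B\setminus A} = \emptyset$ so that the agreement of the two local expressions on $C$ suffices to patch them into a single map of class $\Ccal^r$ on all of $\overline\Rcal$.
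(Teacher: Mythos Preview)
Your proof is correct and follows essentially the same route as the paper: set $c=f|_C-g|_C$, apply the bounded splitting operators from Lemma~\ref{linear-splitting} to get $\Acal c$ and $\Bcal c$, observe that $f-\Acal c$ and $g-\Bcal c$ agree on $C$ and hence amalgamate into $F\in\Ascr^r(\overline\Rcal)^n$, and read off the estimates from the operator norms. Your additional remarks on the dimensional constant and on why the Cartan-decomposition conditions guarantee that $F$ is well defined are accurate elaborations but do not change the argument.
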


The corollary follows immediately from Lemma \ref{linear-splitting} by setting 
\[
	c=f|_C-g|_C \in \Ascr^r(C)^n, \quad a=\Acal c\in \Ascr^r(A)^n,\quad b=\Bcal c\in \Ascr^r(B)^n,
\]
and noting that as a consequence we have $f-a=g-b$ on $C$. Hence the maps $f-a\colon A\to\c^n$ and $g-b\colon B\to\c^n$ amalgamate into a map $F\colon \overline \Rcal\to\c^n$ with the stated properties. The constant $M$ depends on the  norms of the operators $\Acal,\Bcal$ in Lemma \ref{linear-splitting}.

\subsection{Gluing holomorphic sprays}\label{sec:sprays}

The simple gluing method in the previous subsection no longer works in the absence of a linear structure on the target manifold. It can be replaced by the method of gluing pairs of holomorphic sprays of maps that was developed by Drinovec Drnov\v sek and Forstneri\v c \cite{DF,DF2,F2007}; this method applies to maps with values in an arbitrary complex manifold. We recall the main results and refer for further details to the cited papers, or to Sect.\ 5.9 in \cite{F2011}.

\begin{definition}\label{def:spray}
Let $r\in\z_+$. A {\em spray of maps} $\overline \Rcal\to X$ of class $\Ascr^r(\Rcal,X)$ is a $\Ccal^r$ map $F\colon P \times \overline{\Rcal}\to X$ that is holomorphic on $P\times\Rcal$, where $P$ is a domain of a complex Euclidean space $\c^m$ containing the origin $0\in\c^m$. The spray is said to be {\em dominating} if the partial differential $\frac{\partial}{\partial t} F(t,z) \colon T_t\c^m\cong \c^m \to T_{F(t,z)}X$ is surjective for all $(t,z)\in P\times \overline \Rcal$. The map $f=F(0,\cdot)\colon \Rcal\to X$ is the {\em core map}, and $P$ is the {\em parameter set} of the spray $F$. 
\end{definition}

A spray of maps in the above definition can be viewed as a holomorphic map from the domain $P\subset \c^m$ to the Banach manifold $\Ascr^r(\Rcal,X)$.

\begin{lemma}{\rm (Existence of sprays, \cite{DF}.)} \label{lem:Espray}
Let $r\in\z_+$. For every map $f\in \Ascr^r(\Rcal,X)$ there exists a dominating spray of maps $F\colon P\times\overline{\Rcal}\to X$ of class $\Ccal^r$ such that $F(0,\cdot)=f$.  
\end{lemma}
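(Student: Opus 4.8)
The plan is to obtain the spray by ``exponentiating'' a finite family of holomorphic sections of $f^{*}TX$ that span this bundle pointwise. Set $N=\dim_{\c}X$ and $K=f(\overline{\Rcal})$, a compact subset of $X$. Recall that the interior $\Rcal$ of the bordered Riemann surface $\overline{\Rcal}$ is an open Riemann surface, hence a Stein manifold, and that $\overline{\Rcal}$ is a Runge compact domain with smooth boundary in it.

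First I would view $\Rcal$ inside $\Rcal\times X$ via its graph $\Gamma=\{(z,f(z))\colon z\in\Rcal\}$, a closed complex submanifold biholomorphic to $\Rcal$, hence Stein; since $f$ is of class $\Ccal^{r}$ up to $b\Rcal$, the graph extends to a $\Ccal^{r}$ bordered submanifold over $\overline{\Rcal}$. The crucial elementary observation is that the holomorphic subbundle $E:=(\{0\}\oplus TX)|_{\Gamma}$ of $T(\Rcal\times X)|_{\Gamma}$ is a complement of the tangent bundle $T\Gamma$, that $E$ is naturally isomorphic to $f^{*}TX$, and that the differential $d\pi_{X}$ of the projection $\pi_{X}\colon\Rcal\times X\to X$ restricts to a fibrewise isomorphism of $E_{(z,f(z))}$ onto $T_{f(z)}X$ for every $z$. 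By the holomorphic tubular neighbourhood theorem I then obtain an open neighbourhood $\Omega$ of $\Gamma$ and a biholomorphism $\Psi$ from a neighbourhood of the zero section of $E$ onto $\Omega$ which restricts to the identity on $\Gamma$ and whose fibrewise differential along $\Gamma$ is the identity on $E$.

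Next, since $\Gamma$ is Stein, Cartan's Theorem~A furnishes finitely many holomorphic sections $\xi_{1},\dots,\xi_{m}$ of $E$ spanning $E$ at every point; arguing in the $\Ascr^{r}$ category one arranges, in addition, that the $\xi_{j}$ and $\Psi$ are of class $\Ccal^{r}$ up to the boundary. Taking a sufficiently small neighbourhood $P\subset\c^{m}$ of the origin, I would then set
\[
        F(t,z)=\pi_{X}\Bigl(\Psi\bigl(t_{1}\xi_{1}(z)+\cdots+t_{m}\xi_{m}(z)\bigr)\Bigr),\qquad (t,z)\in P\times\overline{\Rcal},
\]
which is holomorphic on $P\times\Rcal$ and of class $\Ccal^{r}$ on $P\times\overline{\Rcal}$. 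Since $\Psi(0)=(z,f(z))$ one has $F(0,\cdot)=\pi_{X}|_{\Gamma}=f$, so $f$ is the core map; and because the fibrewise differential of $\Psi$ at $0$ is the identity on $E_{(z,f(z))}$, the partial differential $\tfrac{\partial}{\partial t}F(0,z)$ equals $d\pi_{X}$ restricted to the span of $\xi_{1}(z),\dots,\xi_{m}(z)$, i.e.\ to all of $E_{(z,f(z))}$; as $d\pi_{X}$ maps this fibre isomorphically onto $T_{f(z)}X$, the spray $F$ is dominating.

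The only genuinely technical point — which I would import verbatim from \cite{DF} rather than reprove — is to perform the tubular neighbourhood construction and to produce the spanning sections with \emph{uniform control up to the boundary} $b\Rcal$, that is, entirely inside the Banach manifold $\Ascr^{r}(\Rcal,\cdot)$ and not merely over the open surface $\Rcal$; the rest of the argument is soft. (One could alternatively avoid the global tubular neighbourhood by covering $K$ with finitely many coordinate charts, forming the obvious ``translation'' sprays over the preimages of these charts in $\overline{\Rcal}$, and amalgamating them; but the needed amalgamation of sprays is itself a result of \cite{DF} of comparable depth, whence the present, more self-contained, route.)
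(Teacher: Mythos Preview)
The paper does not prove this lemma: it cites \cite{DF} for the proof and only records, immediately after the statement, the trivial translation spray $F(t,z)=f(z)+t$ in the special case $X=\c^n$ (which is in fact the only case needed in Sections~\ref{sec:mainlemma}--\ref{sec:proof}, where gluing is done via Corollary~\ref{linear-gluing}; sprays proper are used only in Section~\ref{sec:Stein}).  Your outline is a correct proof: the graph $\Gamma\cong\Rcal$ is Stein, so $E\cong f^{*}TX$ admits finitely many global holomorphic sections spanning each fibre over the compact $\overline{\Rcal}$, and the tubular neighbourhood $\Psi$ converts linear combinations of these sections into a dominating spray with core $f$; the differential computation is exactly as you describe.

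The construction actually given in \cite{DF} (and in \cite[Sec.~5.10]{F2011}) is organised differently.  One finds finitely many holomorphic vector fields $V_{1},\dots,V_{m}$ on a Stein neighbourhood of $K=f(\overline{\Rcal})$ in $X$ that span $T_xX$ for each $x\in K$, and sets $F(t,z)=\phi^{1}_{t_{1}}\circ\cdots\circ\phi^{m}_{t_{m}}(f(z))$, where $\phi^{j}_{s}$ is the local flow of $V_{j}$.  This route avoids the global tubular neighbourhood of the graph but needs a Stein neighbourhood of $K$ inside $X$, whereas your version only uses that $\Rcal$ (hence $\Gamma$) is Stein and therefore works for arbitrary complex targets $X$.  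Both constructions face the same genuine technical issue you correctly isolate---carrying the analysis through in $\Ascr^{r}$ with control up to $b\Rcal$---and both resolve it by the boundary-regularity machinery of \cite{DF}.
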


When $X=\c^n$, we can simply take $F(t,z)=f(z)+t$ for $t\in\c^n$ to get a dominating spray of maps $\c^n\times \Rcal  \to \c^n$ with the core map $f$.

\begin{lemma} {\rm (Gluing sprays of maps, \cite{DF}.)} \label{lem:Gspray}
Let $r\in\z_+$. Let $(A,B)$ be a Cartan decomposition of a bordered Riemann surface $\overline \Rcal$ (Def.\ \ref{def:Cartan}), and set $C=A\cap B$. Given a domain $0\in P_0\subset\c^m$ and a dominating spray of maps $F_A \colon P_0\times A \to X$ of class $\Ccal^r$, there exists a domain $P\Subset P_0\subset\c^m$ containing $0\in\c^m$ such that the following holds.  

For every spray of maps $F_B\colon P_0\times B \to X$ of class $\Ccal^r$ that is sufficiently $\Ccal^r$-close to $F_A$ on $P_0\times C$ there exists a spray of maps $F\colon P\times \overline \Rcal \to X$ of class $\Ccal^r$ such that:
\begin{itemize}
\item the restriction $F \colon P\times A\to X$ is close to $F_A$ in the $\Ccal^r$-topology (depending on the $\Ccal^r$-distance between $F_A$ and $F_B$ on $P_0\times C$), and
\item $F(t,z)\in \{F_1(s,z)\colon s\in P_0\}$ for all $t\in P$ and $z\in B$.
\end{itemize}    
\end{lemma}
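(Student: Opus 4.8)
The plan is to reduce the gluing of sprays to the linear gluing of Corollary \ref{linear-gluing} by expressing everything in terms of a transition map on the overlap $C$ and then solving a Cousin-I problem with bounds, combined with the inverse function theorem in the relevant Banach space. First I would fix the dominating spray $F_A\colon P_0\times A\to X$ and, shrinking $P_0$ if necessary, choose a finite atlas of coordinate charts on $X$ covering the compact set $F_A(P_0\times C)$; since $F_A$ is dominating, after shrinking the parameter domain to some $P_1\Subset P_0$ the partial differential $\partial_t F_A$ is surjective and admits a bounded right inverse $\theta(t,z)\colon T_{F_A(t,z)}X\to \c^m$ depending smoothly on $(t,z)$. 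When $F_B$ is $\Ccal^r$-close enough to $F_A$ on $P_0\times C$, the two sprays lie in a common coordinate tube, and one can write $F_B(t,z)=F_A(\gamma(t,z),z)$ on $P_1\times C$ for a $\Ccal^r$ map $\gamma$ close to the projection $(t,z)\mapsto t$; this uses the implicit function theorem applied fibrewise over $z\in C$, with the right inverse $\theta$ providing the needed splitting. The transition map $\gamma$ measures the discrepancy between the two sprays, and $\|\gamma(t,z)-t\|_{\Ccal^r(P_1\times C)}$ is controlled by the $\Ccal^r$-distance between $F_A$ and $F_B$ on $P_0\times C$.

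Next I would solve a nonlinear Cousin-I problem: find $\Ccal^r$ maps $\alpha\colon P\times A\to\c^m$ and $\beta\colon P\times B\to\c^m$, holomorphic in the interior and close to the identity in the $t$-variable, such that $\gamma(\alpha(t,z),z)=\beta(t,z)$ on $P\times C$, where $P\Subset P_1$ is a suitably small parameter domain about $0\in\c^m$. Writing $\alpha(t,z)=t+a(t,z)$, $\beta(t,z)=t+b(t,z)$ and linearizing, the leading-order equation is $a(t,z)-b(t,z)=\gamma(t,z)-t=:c(t,z)$, which is exactly the additive Cousin-I problem on the Cartan decomposition $(A,B)$; this is solved with linear bounds by the operators $\Acal,\Bcal$ of Lemma \ref{linear-splitting} applied with parameter, i.e.\ $a=\Acal_t c$, $b=\Bcal_t c$. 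The full nonlinear equation is then handled by a standard fixed-point / implicit-function argument in the Banach space $\Ascr^r$: the nonlinear correction is quadratically small in $\|c\|$, so for $\|c\|_{\Ccal^r(P_1\times C)}$ small enough (equivalently, for $F_B$ close enough to $F_A$ on $P_0\times C$) a solution $(\alpha,\beta)$ exists with $\|\alpha(t,z)-t\|_{\Ccal^r(P\times A)}$ and $\|\beta(t,z)-t\|_{\Ccal^r(P\times B)}$ controlled by $\|c\|$, hence by the $\Ccal^r$-distance between $F_A$ and $F_B$. Because $\alpha$ and $\beta$ are close to the projection and $P$ is small, the ranges of $\alpha(t,\cdot)$ and $\beta(t,\cdot)$ stay inside $P_0$.

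Finally I would define $F\colon P\times\overline\Rcal\to X$ by
\[
	F(t,z)=\begin{cases} F_A(\alpha(t,z),z), & z\in A,\\ F_B(\beta(t,z),z), & z\in B.\end{cases}
\]
On the overlap $C$ the two definitions agree, since $F_B(\beta(t,z),z)=F_A(\gamma(\beta(t,z),z),z)$ and, by construction of $(\alpha,\beta)$, $\gamma(\alpha(t,z),z)=\beta(t,z)$, hence also $\gamma(\beta(t,z),z)=\gamma(\gamma(\alpha(t,z),z),z)$; one arranges the equations so that the two expressions match exactly. Therefore $F$ is a well-defined $\Ccal^r$ map, holomorphic on $P\times\Rcal$. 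The estimate $\|\alpha(t,z)-t\|_{\Ccal^r(P\times A)}\lesssim\|c\|$ gives that $F$ is $\Ccal^r$-close to $F_A$ on $P\times A$, with the closeness governed by the $\Ccal^r$-distance between $F_A$ and $F_B$ on $P_0\times C$, which is the first bullet; and $F(t,z)=F_A(\alpha(t,z),z)$ with $\alpha(t,z)\in P_0$ together with $F_B(\beta(t,z),z)\in\{F_A(s,z):s\in P_0\}$ gives the second bullet (here $F_1$ denotes the spray $F_A$).

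The main obstacle is the nonlinear Cousin-I step: one must carefully set up the fixed-point scheme in $\Ascr^r$ so that the parameter domain $P$ can be chosen independently of $F_B$ (it depends only on $F_A$ and the Cartan pair), while the size of the correction, and hence the closeness of $F$ to $F_A$, tends to zero as $F_B\to F_A$ on $P_0\times C$. Keeping the domains of the maps $\alpha(t,\cdot),\beta(t,\cdot)$ inside $P_0$ uniformly in $t\in P$, and ensuring the two branches of $F$ genuinely agree on $C$ rather than merely agreeing to high order, are the points that require care; everything else is bounded linear algebra in Banach spaces and an application of Lemma \ref{linear-splitting}. This is precisely the argument carried out in detail in \cite{DF,DF2,F2007}, to which we refer for the full proof.
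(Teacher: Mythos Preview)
The paper does not give its own proof of this lemma; it is stated with a reference to \cite{DF} (see also Sect.\ 5.9 in \cite{F2011}), and your sketch is essentially the argument carried out there: use domination of $F_A$ and the implicit function theorem to obtain a fibrewise transition map $\gamma$ close to the identity on $P_1\times C$, solve the nonlinear splitting $\gamma=\alpha\circ\beta^{-1}$ (or the equivalent composition) by linearizing to the additive Cousin-I problem handled by Lemma~\ref{linear-splitting} and iterating, and then glue by reparametrizing each spray in the $t$-variable. So your approach coincides with the intended one.

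Two small points deserve correction. First, in the second bullet of the statement $F_1$ should be read as $F_B$, not $F_A$: the spray $F_A$ is not defined on $B\setminus A$, whereas your construction gives $F(t,z)=F_B(\beta(t,z),z)$ on $B$ with $\beta(t,z)\in P_0$, which is exactly the assertion $F(t,z)\in\{F_B(s,z):s\in P_0\}$. Second, your overlap verification is slightly tangled: from $F_B(s,z)=F_A(\gamma(s,z),z)$ on $C$, matching the two branches requires $\alpha(t,z)=\gamma(\beta(t,z),z)$, not the relation you wrote; the linearization of this is still $a-b=c$ with $c(t,z)=\gamma(t,z)-t$, so the fixed-point argument goes through unchanged once the correct composition is set up. With these adjustments the proof is in order.
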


\begin{remark}
\label{rem:r=0}
For applications in this paper it suffices to use these gluing methods in the basic case $r=0$, i.e., with continuity up to the boundary. This is because every map $f\in \Ascr(\Rcal,X)$ can be approximated, uniformly on $\overline \Rcal$, by maps holomorphic in a neighborhood of $\overline \Rcal$ in a larger Riemann surface \cite{DF}.
\end{remark}

%%%%%%%%%%
%%%%%%%%%%
%%%%%%%%%%
%%%%%%%%%%
%%%%%%%%%%
%%%%%%%%%%

\section{The Main Lemma}\label{sec:mainlemma}

In this section we prove an approximation result, Lemma \ref{lem:main}, which is the core of this paper; Theorem \ref{th:intro} will follow by a standard recursive application of it. We focus on maps to $\c^2$, although the same method works for any $\c^n$, $n>1$.

Let $\b(s)=\{u\in\c^2\colon |u|<s\}$ denote the open ball of radius $s>0$ in $\c^2$, centered at the origin, and let $\overline{\b}(s)=\{u\in\c^2\colon |u|\leq s\}$ be the corresponding closed ball. Recall from Sec.\ \ref{sec:preli} that $\Bscr(\Rcal)$ denotes the set of all bordered Runge domains $\Mcal\Subset\Rcal$, and $\Iscr(\Rcal)$ is the set of all immersions $\overline \Rcal\to \c^2$ that are holomorphic in $\Rcal$.

%
%
%  MAIN LEMMA
%
%
\begin{lemma}\label{lem:main}
Let $\Rcal$ be a bordered Riemann surface, let $\Mcal\in\Bscr(\Rcal)$, let $z_0$ be a point in $\Mcal$, let $f\in \Iscr(\Rcal)$, and let $\epsilon$, $\rho$, and $s>\epsilon$ be positive constants. Assume that
\begin{enumerate}[\sf (i)]
\item $f(\overline{\Rcal} \setminus  \Mcal)\subset \b(s) \setminus  \overline{\b}(s-\epsilon)$, and
\item $\dist_{(\Rcal,f)}(z_0,b\overline{\Rcal})>\rho$.
\end{enumerate}
Then, for any $\hat{\epsilon}>0$ and $\delta>0$, there exists an immersion $\hat{f}\in \Iscr(\Rcal)$ 
satisfying the following conditions: 
\begin{enumerate}[\sf (L1)]
\item $\|\hat{f}-f\|_{1,\overline{\Mcal}}<\hat{\epsilon}$,
\item $\hat{f}(b\overline{\Rcal})\subset \b(\hat{s}) \setminus  \overline{\b}(\hat{s}-\hat{\epsilon})$, where $\hat{s} =\sqrt{s^2+\delta^2}$,
\item $\hat{f}(\overline{\Rcal}\setminus  \Mcal)\cap\overline{\b}(s-\epsilon)=\emptyset$, and
\item $\dist_{(\Rcal,\hat{f})}(z_0,b\overline{\Rcal})>\hat{\rho} :=\rho+\delta$.
\end{enumerate}
\end{lemma}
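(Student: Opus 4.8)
The overall strategy is a two-part deformation: first push the boundary of the curve outward (increasing the norm of $f$ on $b\overline{\Rcal}$ from roughly $s$ to roughly $\hat s$) while simultaneously stretching the induced metric near the boundary, and then clean up the estimates. The increase $\hat s = \sqrt{s^2 + \delta^2}$ is exactly what one obtains from adding a second-coordinate-type perturbation of size $\delta$ to a function of size $s$, so the mechanism for (L2) is clear; the content is to do this without destroying (L1) on $\overline{\Mcal}$, and to gain the extra length $\delta$ in (L4).

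First I would fix a finite set of boundary points $p_1,\dots,p_k \in b\overline{\Rcal}$, one in each boundary curve of a refined structure, and use the Forstneri\v c--Wold method of \emph{exposing boundary points} (cited in the introduction, from \cite{FW}) to modify $f$ near these points so that the curve terminates in long, thin "fingers" reaching near $\b(\hat s)$; this is what localizes the length estimates and prevents shortcuts. The boundary $b\overline{\Rcal}$ minus small neighborhoods of the $p_j$ consists of finitely many compact arcs $J_1,\dots,J_m$, and on a collar neighborhood of each $J_i$ I would conformally identify a neighborhood with (a piece of) the disc and apply the Riemann--Hilbert Lemma \ref{lem:Hilbert}: take $g(\zeta,\cdot)$ to be a small analytic disc of the form $f(\zeta) + \delta\, \nu(\zeta)\, \phi(\cdot)$, where $\nu(\zeta)$ is a unit vector chosen (using $f(\zeta)\in\b(s)\setminus\overline{\b}(s-\epsilon)$ and $s>\epsilon$) so that $|f(\zeta) + \delta\nu(\zeta)z| \approx \sqrt{s^2+\delta^2}$ for $|z|=1$, and so that $\langle f(\zeta),\nu(\zeta)\rangle$ is small. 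The resulting map $h$ produced by the lemma has boundary values that spin rapidly over the torus formed by the $g(\zeta,\cdot)$, which forces any divergent curve ending on $J_i$ to be long (this is the standard length estimate, compare \cite{DF}): integrating $|dh|$ across the fast oscillation yields a contribution $\gtrsim \delta$, giving (L4); and the placement of the torus near $|u| = \hat s$ gives (L2) and (L3) on that part of the boundary. On the exposed fingers near the $p_j$, the exposing construction itself supplies the length $\gtrsim\rho+\delta$ and the correct placement.

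The two local modifications — the Riemann--Hilbert perturbations over the collars of the arcs $J_i$, and the exposed fingers near the $p_j$ — must be glued to the original $f$ on $\Mcal$. Here I would use a Cartan decomposition $(A,B)$ of $\overline{\Rcal}$ in which $A$ is a slight enlargement of $\overline{\Mcal}$ together with thin strips, $B$ is a disjoint union of discs covering the collars and the exposed points, $C = A\cap B$ is small, and the two maps agree up to a controllably small error on $C$ (the last part of Lemma \ref{lem:Hilbert}, with the support of the perturbation concentrated away from $C$, makes $\|h-f\|_{0,\overline{\d}\setminus C}$ as small as we like). Then Corollary \ref{linear-gluing} (or, if one prefers to work invariantly, Lemma \ref{lem:Gspray}, although $\c^2$ is linear so the elementary version suffices) amalgamates them into $\hat f \in \Ascr^1(\Rcal,\c^2)$ with $\|\hat f - f\|_{1,A}$ as small as desired, in particular $<\hat\epsilon$ on $\overline{\Mcal}$, giving (L1); since immersivity is an open condition and $f$ is already an immersion, $\hat f$ remains an immersion after a sufficiently small perturbation.

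The main obstacle, in my view, is the interaction between the two length-producing mechanisms: the Riemann--Hilbert estimate only controls divergent curves that eventually run along a collar of some $J_i$, while the exposing construction controls those that run out through a finger at some $p_j$, and one must be sure these two families exhaust \emph{all} divergent curves with the right quantitative bound $\rho+\delta$ — i.e., that no curve can "escape" through the transition regions near the $p_j$ where the collar meets the finger. This is exactly the point where, as the introduction says, "the first method enables us to localize the length estimates furnished by the second method": one chooses the neighborhoods of the $p_j$ small enough (after fixing the Riemann--Hilbert data on the $J_i$) that the length already accumulated before a divergent curve can reach a transition region, plus the guaranteed gain in whichever region it ultimately diverges through, exceeds $\rho+\delta$. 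Keeping all the $\Ccal^1$ (not merely $\Ccal^0$) estimates under control through the gluing, so that the induced metric genuinely dominates the claimed distance, and verifying (L3) holds on \emph{all} of $\overline{\Rcal}\setminus\Mcal$ and not just on $b\overline{\Rcal}$ (using that the new curve stays outside $\overline{\b}(s-\epsilon)$ because the perturbations only push outward from the shell $\b(s)\setminus\overline{\b}(s-\epsilon)$), are the remaining technical points.
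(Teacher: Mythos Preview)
Your architecture matches the paper's: subdivide $b\overline{\Rcal}$ by points $p_{i,j}$, expose those points via Forstneri\v c--Wold to get an intermediate immersion $f_0$, apply Riemann--Hilbert over discs containing the arcs between them, and glue via a Cartan decomposition. But your account of (L4) has a genuine gap precisely where you flag ``the main obstacle,'' and the fix you propose does not work.

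The difficulty is not that a curve might slip through a transition zone \emph{between} the finger and the RH collar; it is that these regions \emph{overlap}. The RH perturbation set $C_{i,j}$ must meet the finger neighborhoods $U_{i,j-1}^1,\,U_{i,j}^1$, or else (L2) fails at the junction. A curve crossing the finger zone $U_{i,j}^1\setminus U_{i,j}^2$ may therefore lie partly in $C_{i,j}$ or $C_{i,j+1}$, where $\hat f$ is close to $h_{i,j}$ (resp.\ $h_{i,j+1}$), \emph{not} to $f_0$, so the $f_0$-length of the finger no longer controls the $\hat f$-length there. Shrinking neighborhoods or accumulating length beforehand does not resolve this. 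The paper supplies two ingredients you are missing. First, on each disc $D_{i,j}$ the RH boundary discs lie in the \emph{fixed} complex line $f_0(p)+\langle f(p_{i,j})\rangle^\bot$ (not a varying $\nu(\zeta)$), so the projection $\pi_{i,j}$ onto $\span\{f(p_{i,j})\}$ satisfies $\pi_{i,j}\circ h_{i,j}\approx \pi_{i,j}\circ f_0$ throughout $\overline D_{i,j}$. Second, the exposed arc $\lambda_{i,j}^1$ is not merely long but \emph{highly oscillating inside the small set} $\Uscr_{i,j}\cap\Uscr_{i,j+1}$, so that for every Borel subset $J\subset\lambda_{i,j}^1$ one has $\min_k\length(\pi_{i,k}(J))+\min_k\length(\pi_{i,k}(\lambda_{i,j}^1\setminus J))>\delta$ for $k\in\{j,j+1\}$ (the paper's conditions (b5)/(d9)). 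One then splits the curve into its pieces in $\overline D_0$, $\overline C_{i,j}$, $\overline C_{i,j+1}$ and bounds each from below by the appropriate projected length, which survives the RH deformation. The same projection control is what verifies (L3) on the RH-perturbed part of $\overline{\Rcal}\setminus\Mcal$. For curves avoiding all finger cores $U_{i,j}^2$, the length comes not from ``integrating across the fast oscillation'' but simply from the Pythagorean distance between an interior point of $D_{i,j}$ (image in $\Uscr_{i,j}$, near radius $s$) and its boundary endpoint (image near radius $\hat s$ with nearly the same $\pi_{i,j}$-projection).
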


The lemma asserts that any holomorphic immersion $f\colon \overline{\Rcal}\to\c^2$ such that $f(b\overline{\Rcal})$ lies near a given sphere in $\c^2$ (see {\sf (i)}) can be deformed into another holomorphic immersion $\hat{f}\colon \overline{\Rcal}\to\c^2$ mapping the boundary $b\overline{\Rcal}$ as close as desired to a larger sphere in $\c^2$ (see {\sf (L2)}). The deformation is strong near the boundary of $\Rcal$ and is arbitrarily small on a given compact subset of $\Rcal$. In this process both the extrinsic and the intrinsic diameters of the surface grow, but their respective growths are related in a Pythagoras' way; compare the bounds $\hat{s}$ in {\sf (L2)} and $\hat{\rho}$ in {\sf (L4)}. This link, which is the key to obtain completeness while preserving boundedness, follows the spirit of Nadirashvili's original construction of complete bounded minimal surfaces in $\r^3$  \cite{Nad}. The novelty of Lemma \ref{lem:main} is that do not have to modify the complex struct
 ure on $\Rcal$ in order to keep the extrinsic diameter of the curve suitably bounded. This represents a clear difference with respect to previous results, and is the main improvement obtained in this work.

The proof of Lemma \ref{lem:main} consists of three main steps which we now describe.

In the first step (Subsec.\ \ref{sec:step1}) we split the boundary $b\overline{\Rcal}$ into finitely many compact Jordan arcs $\alpha_{i,j}$, with endpoints $p_{i,j-1}$ and $p_{i,j}$, so that deformations of $f$ near $\alpha_{i,j}$, preserving the complex direction $f(p_{i,j})\in\c^2$, keep the image of $\alpha_{i,j}$ disjoint from the ball $\overline{\b}(s-\epsilon)$; at the same time, the length of any segment in $\c^2$, orthogonal to $f(p_{i,j})$ and connecting $f(\alpha_{i,j})$ to the boundary of $\b(\hat{s})$, is bigger than $\delta$. 

In the second step (Subsec.\ \ref{sec:step2}) we deform $f$ into another holomorphic immersion $f_0\colon \overline{\Rcal}\to\c^2$. The deformation is large near the points $p_{i,j}$ and is small elsewhere.
The new immersion $f_0$ maps a neighborhood of each point $p_{i,j}$ close to the boundary of the ball $\b(\hat{s})$ (see {\sf (d6)}), and the length of the image of any curve in $\Rcal$ connecting $z_0$ to $p_{i,j}$ is larger than $\hat{\rho}$ (see {\sf (d9)}). Roughly speaking, $f_0$  satisfies items {\sf (L1)}, {\sf (L3)}, and {\sf (L4)} in Lemma \ref{lem:main}, but it meets {\sf (L2)} only near the points $p_{i,j}$. This step is inspired by the method of {\em exposing boundary points} of a complex curve in $\c^2$ that was developed by Forstneri\v c and Wold \cite{FW} (see also Sec.\ 8.9 in \cite{F2011}).   

In the third step (Subsecs.\ \ref{sec:step3} and \ref{sec:step4}) we work on the part $\beta_{i,j}^2$ of the arc $\alpha_{i,j}$ where $f_0$ does not meet the condition {\sf (L2)}; that is, outside a neighborhood of the endpoints $p_{i,j-1}$ and $p_{i,j}$. We use Lemma \ref{lem:Hilbert} to solve a suitable Riemann-Hilbert problem over a closed disc $\overline{D}_{i,j}$ containing the arc $\beta_{i,j}^2$ in its boundary. This gives a holomorphic map $h_{i,j}\colon \overline{D}_{i,j}\to\c^2$ which satisfies {\sf (L2)} over $\beta_{i,j}^2$ (see {\sf (f1)}), is close to $f_0$ outside a small neighborhood of $\beta_{i,j}^2$ in $\overline{D}_{i,j}$ (see {\sf (f3)}), and whose orthogonal projection onto the complex line spanned by $f_0(p_{i,j})$ is close to the projection  of $f_0$ (see {\sf (f2)}). We then glue $f_0$ and the $h_{i,j}$'s into a new immersion $\hat{f}\in \Iscr(\Rcal)$.

Finally, in Subsec.\ \ref{sec:step5} we verify that the map $\hat{f}$ obtained in this way satisfies Lemma \ref{lem:main}.

%%%%%%%%%%
%%%%%%%%%%
%%%%%%%%%%
%%%%%%%%%%
%%%%%%%%%%
%%%%%%%%%%

\smallskip \noindent \bf Proof of Lemma \ref{lem:main}. \rm
Replacing $\Mcal$ by a larger bordered domain in $\Bscr(\Rcal)$ if necessary, we may assume that 
\begin{equation}\label{eq:lemma}
\dist_{(\Rcal,f)}(z_0,b\overline{\Mcal})>\rho;
\end{equation}
see the strict inequality {\sf (ii)} and recall that $b\overline{\Rcal}$ is compact. Moreover, we can realize $\Rcal$ as a bordered domain in an open Riemann surface $\widehat{\Rcal}$ such that $\Rcal \in \Bscr(\widehat{\Rcal})$. Then, by Mergelyan's theorem, we may assume that $f\in\Iscr(\Rcal)$ extends to a holomorphic immersion $f\in\Iscr(\widehat{\Rcal})$.

%%%%%%%%%%
%%%%%%%%%%
%%%%%%%%%%
%%%%%%%%%%
%%%%%%%%%%
%%%%%%%%%%

\subsection{Splitting $b\overline{\Rcal}$}\label{sec:step1}

Note that for any $u\in\c^2\setminus \{0\}$, $u+\langle u\rangle^\bot$ is the affine complex line passing through $u$ and orthogonal to $u$. From {\sf (i)}, the definition of $\hat{s}$ in {\sf (L2)}, and Pythagoras' theorem, one has
\[
\dist\big( u\,,\, b{\b}(\hat{s}) \cap (u+\langle u\rangle^\bot)\big)>\delta \quad \forall u\in f(b\overline{\Rcal}).
\]
Then, by continuity and up to decreasing $\hat{\epsilon}>0$ if necessary, any point $u\in f(b\overline{\Rcal})$ admits an open neighborhood $U_u$ in $\c^2$ such that
\begin{equation}\label{eq:Pytha}
\dist\big( v\,,\, (\b(\hat{s}) \setminus  \overline{\b}(\hat{s}-\hat{\epsilon}))\cap (w+\langle z\rangle^\bot)\big)>\delta\quad \forall v,w,z\in U_u.
\end{equation}
Condition {\sf (i)} also implies that
\[
(u+\langle u\rangle^\bot)\cap\overline{\b}(s-\epsilon)=\emptyset\quad \forall u\in f(b\overline{\Rcal}).
\]
Hence any point $u\in f(b\overline{\Rcal})$ admits an open neighborhood $V_u\subset \c^2$ such that
\begin{equation}\label{eq:far}
(v+\langle w\rangle^\bot)\cap\overline{\b}(s-\epsilon)=\emptyset\quad \forall v,w\in V_u;
\end{equation}
we are taking into account the compactness of $\overline{\b}(s-\epsilon)$. Set
\[
\Uscr_u=U_u\cap V_u,\; u\in f(b\overline{\Rcal}), \qquad  \Uscr=\{\Uscr_u\colon u\in f(b\overline{\Rcal})\}.
\]

Denote by $\alpha_1,\ldots,\alpha_\igot$ the connected boundary curves of $\overline{\Rcal}$, so $b\overline{\Rcal}=\cup_{i=1}^\igot \alpha_i$. Since $\Rcal\in\Bscr(\widehat{\Rcal})$, the $\alpha_i$'s are pairwise disjoint smooth closed Jordan curves in $\widehat{\Rcal}$. As $\Uscr$ is an open covering of the compact set $f(b\overline{\Rcal})\subset\c^2$, there exist an integer $\jgot \geq 3$ and compact connected subarcs $\{\alpha_{i,j}\colon (i,j) \in \{1,\ldots,\igot\}\times\z_\jgot\}$, where $\z_\jgot=\{0,\ldots,\jgot-1\}$ denotes the additive cyclic group of integers modulus $\jgot$, satisfying the following conditions:
\begin{enumerate}[\sf ({a}1)]
\item $\cup_{j=1}^\jgot\alpha_{i,j}=\alpha_i$,
\item $\alpha_{i,j}$ and $\alpha_{i,j+1}$ have the common endpoint $p_{i,j}$ and are otherwise disjoint,
\item there exist points $a_{i,j}\in \b(\hat{s})\setminus \overline{\b}(\hat{s}-\hat{\epsilon})$ such that 
\[
(a_{i,j}+\langle f(p_{i,k})\rangle^\bot)\cap \overline{\b}(\hat{s}-\hat{\epsilon})=\emptyset \quad \forall k\in\{j,j+1\}, \quad \text{and}
\] 
\item for every $(i,j) \in \{1,\ldots,\igot\}\times\z_\jgot$ there exists a set $\Uscr_{i,j}\in\Uscr$ containing the curve $f(\alpha_{i,j})$. In particular $f(p_{i,j})\in \Uscr_{i,j}\cap\Uscr_{i,j+1}$.
\end{enumerate}

A splitting with these properties is found by choosing the arcs $\alpha_{i,j}$ such that their images $f(\alpha_{i,j}) \subset \c^2$ have small enough diameter.  

%%%%%%%%%%
%%%%%%%%%%
%%%%%%%%%%
%%%%%%%%%%
%%%%%%%%%%
%%%%%%%%%%

\subsection{Stretching from the points $p_{i,j}$}\label{sec:step2}

We adapt to our needs the method from \cite{FW} that was used in that paper for exposing boundary points.  (See also Sections 8.8 and 8.9 in \cite{F2011}.) Our goal here is a different one: we wish to modify the immersion so that the images of a certain finite collection of arcs in $\Rcal$, terminating at points of $b\overline \Rcal$, become very long in $\c^2$.

Recall that $\overline \Rcal$ is a compact bordered domain in $\widehat \Rcal$. For every $(i,j)\in\{1,\ldots,\igot\}\times\z_\jgot$ we choose an embedded real analytic arc $\gamma_{i,j}\subset\widehat{\Rcal}$ that is attached to $\overline{\Rcal}$ at the endpoint $p_{i,j}$ and intersects $b\overline \Rcal$ transversely there, and is otherwise disjoint from $\overline{\Rcal}$. We insure that the arcs $\gamma_{i,j}$ are pairwise disjoint. Let $q_{i,j}$ denote the other endpoint of $\gamma_{i,j}$. We split $\gamma_{i,j}$ into compact subarcs $\gamma_{i,j}^1$ and $\gamma_{i,j}^2$, with a common endpoint, so that $p_{i,j}\in\gamma_{i,j}^1$ and $q_{i,j}\in\gamma_{i,j}^2$.

Next we choose compact smooth embedded arcs $\lambda_{i,j} \subset \c^2$ satisfying the following properties:
\begin{enumerate}[\sf ({b}1)]
\item $\lambda_{i,j}\subset \b(\hat{s})\setminus \overline{\b}(s-\epsilon)$,
\item $\lambda_{i,j}$ is split into compact subarcs $\lambda_{i,j}^1$ and $\lambda_{i,j}^2$ with a common endpoint,
\item $\lambda_{i,j}^1$ agrees with the arc $f(\gamma_{i,j})$ near the endpoint $f(p_{i,j})$; recall that $f\in\Iscr(\widehat{\Rcal})$, 
\item $\lambda_{i,j}^1\subset \Uscr_{i,j}\cap\Uscr_{i,j+1}$; see properties {\sf (b3)} and {\sf (a4)},
\item if $J\subset\lambda_{i,j}^1$ is a Borel measurable subset, then
\begin{eqnarray*}
\min\{\length(\pi_{i,j}(J)),\length(\pi_{i,j+1}(J))\}&+&\\
\min\{\length(\pi_{i,j}(\lambda_{i,j}^1\setminus J)),\length(\pi_{i,j+1}(\lambda_{i,j}^1\setminus J))\}&>&\delta,
\end{eqnarray*}
where $\pi_{i,k}\colon \c^2\to\span\{f(p_{i,k})\}\subset\c^2$ denotes the orthogonal projection (observe that {\sf (i)} implies $f(p_{i,k})\neq 0$),
\item $(\lambda_{i,j}^2+\langle f(p_{i,k})\rangle^\bot)\cap \overline{\b}(s-\epsilon)=\emptyset$ for $k\in \{j,j+1\}$; see {\sf (a4)}, {\sf (b4)}, and (\ref{eq:far}), and
\item the endpoint $v_{i,j}$ of $\lambda_{i,j}$ contained in the subarc $\lambda_{i,j}^2$ lies in the  shell $\b(\hat{s})\setminus \overline{\b}(\hat{s}-\hat{\epsilon})$ and satisfies $(v_{i,j}+\langle f(p_{i,k})\rangle^\bot)\cap \overline{\b}(\hat{s}-\hat{\epsilon})=\emptyset$ for $k\in\{j,j+1\}$; see {\sf (a3)}.
\end{enumerate}

The existence of such arcs $\lambda_{i,j}$ is an easy exercise. Property {\sf (b1)} is compatible with the rest thanks to 
conditions {\sf (a3)} and {\sf (a4)}. To enjoy {\sf (b4)} and {\sf (b5)}, the curve $\lambda_{i,j}^1$ must be highly oscillating and with small diameter in $\c^2$. On the other hand, to satisfy {\sf (b6)} and {\sf (b7)}, one can simply take $\lambda_{i,j}^2$ to be a straight line segment in $\b(\hat{s})\setminus \overline{\b}(s-\epsilon)$, connecting $\Uscr_{i,j}\cap\Uscr_{i,j+1}$ (see {\sf (b4)}) to a point $v_{i,j}$ as those given by {\sf (a3)}.

By property {\sf (b3)} we can find a smooth map $f_{\rm e}\colon \widehat \Rcal \to\c^2$ which agrees with $f$ in an open neighborhood of $\overline \Rcal$, and which maps the arcs $\gamma_{i,j}^1$ and $\gamma_{i,j}^2$ diffeomorphically onto the corresponding arcs $\lambda_{i,j}^1$ and $\lambda_{i,j}^2$ for all $(i,j)$.

The compact set $K:=\overline{\Rcal}\cup(\cup_{i,j}\gamma_{i,j})\subset \widehat{\Rcal}$ clearly admits a basis of open neighborhoods that are Runge in $\widehat \Rcal$. By Mergelyan's theorem one can therefore approximate $f_{\rm e}$, uniformly on an open neighbor\-hood of $\overline{\Rcal}$ and in the $\Ccal^1$-topology on each of the arcs $\gamma_{i,j}$, by a holomorphic immersion $\tilde{f}\colon \widehat \Rcal \to\c^2$.

We shall now apply \cite[Theorem 2.3]{FW} (see also \cite[Theorem 8.8.1]{F2011}). Choose a small open neighborhood $V\subset \widehat \Rcal$ of $K$. Further, for every $(i,j)$ we choose a pair of small neighborhoods 
$W'_{i,j} \Subset W_{i,j} \Subset \widehat \Rcal\setminus \overline \Mcal$ of the point $p_{i,j}$ and a neighborhood $V_{i,j} \Subset \widehat \Rcal \setminus \overline \Mcal$ of the arc $\gamma_{i,j}$. The cited theorem furnishes a smooth diffeomorphism $\phi\colon \overline {\Rcal}\to\phi (\overline{\Rcal})\subset V$ satisfying the following properties (see Fig.\ \ref{fig:Phi}):
\begin{itemize}
\item $\phi \colon \Rcal \to  \phi(\Rcal)$ is biholomorphic,
\item $\phi$ is as close as desired to the identity in the $\Ccal^1$-topology on $\overline \Rcal \setminus \cup_{i,j} W'_{i,j}$, and 
\item $\phi(p_{i,j}) = q_{i,j}$ and $\phi(\overline \Rcal \cap W'_{i,j}) \subset W_{i,j} \cup V_{i,j}$ for all $(i,j)\in\{1,\ldots,\igot\}\times\z_\jgot$. % $(i,j)$.
\end{itemize}
Intuitively speaking, $\phi$ hardly changes $\overline \Rcal$ outside small neighborhoods of the chosen boundary points $p_{i,j}$, while at each of these points it creates a narrow spike reaching up to the opposite endpoint $q_{i,j}$ of the arc $\gamma_{i,j}$. (See Fig.\ 8.1 in \cite[p.\ 367]{F2011}.) Although it is not claimed in the cited sources that the arc $\gamma_{i,j}$ actually belongs to the image $\phi(\overline \Rcal)$, an inspection of the construction shows that we can move it very slightly to a nearby arc  $\gamma'_{i,j}\subset \widehat\Rcal$ (keeping its endpoints fixed) to obtain this property. If the new arc $\gamma'_{i,j}$ is close enough to $\gamma_{i,j}$ (which can be achieved by a suitable choice of $\phi$), we can also replace the arc $\lambda_{i,j} \subset \c^2$ by the image $\tilde f (\gamma'_{i,j})$ without disturbing any of the properties {\sf (b1)}--{\sf (b7)}. In the sequel we drop the primes and assume that, in addition to the above, we have
\begin{itemize}
\item $\gamma_{i,j}\setminus\{q_{i,j}\}\subset\phi(\Rcal \setminus \overline{\Mcal})$ for all $(i,j)$.
\end{itemize}

\begin{figure}[ht]
    \begin{center}
    \resizebox{0.9\textwidth}{!}{\includegraphics{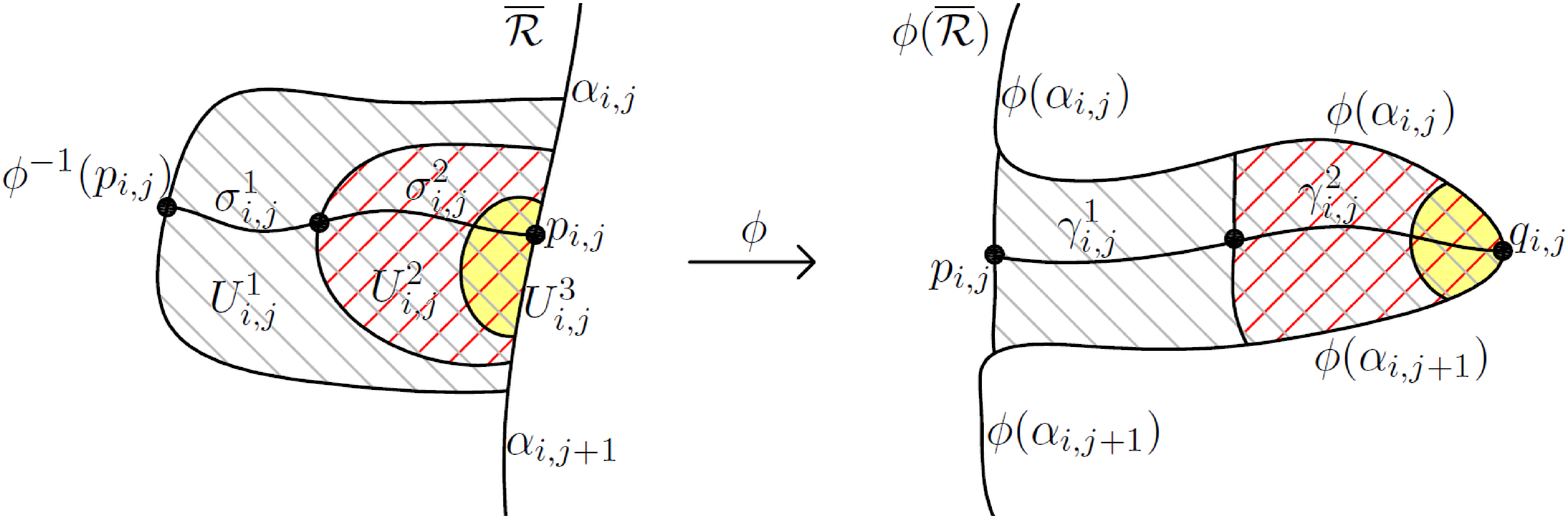}}
    %\scalebox{0.33}{\includegraphics{Phi}}
        \end{center}
        \vspace{-0.25cm}
\caption{The diffeomorphism $\phi$.}\label{fig:Phi}
\end{figure} 

Set 
\begin{equation} \label{eq:sigma}
\sigma_{i,j}^k=\phi^{-1}(\gamma_{i,j}^k)\; \text{for $k\in\{1,2\}$}, \qquad \sigma_{i,j}=\phi^{-1}(\gamma_{i,j})=\sigma_{i,j}^1\cup \sigma_{i,j}^2.
\end{equation}
(See Fig.\ \ref{fig:Phi}.) If the approximations desribed above are close enough, then the composition
\begin{equation}
\label{eq:f0}
		f_0=\tilde{f}\circ\phi\colon \overline{\Rcal}\to\c^2
\end{equation}
is a holomorphic immersion satisfying the following properties:
\begin{enumerate}[\sf ({c}1)]
\item $f_0(p_{i,j})\in \b(\hat{s})\setminus \overline{\b}(\hat{s}-\hat{\epsilon})$ and $(f_0(p_{i,j})+\langle f(p_{i,k})\rangle^\bot)\cap \overline{\b}(\hat{s}-\hat{\epsilon})=\emptyset$ for $k\in\{j,j+1\}$ and $(i,j)\in\{1,\ldots,\igot\}\times\z_\jgot$; recall that $f_0(p_{i,j})=\tilde{f}(q_{i,j})\approx f_{\rm e}(q_{i,j})=v_{i,j}$ and see {\sf (b7)},
\item $\|f_0-f\|_{1,\overline{\Mcal}}<\hat{\epsilon}/2$,
\item $f_0(\overline{\Rcal}\setminus \Mcal)\subset \b(\hat{s})\setminus \overline{\b}(s-\epsilon)$; see the open conditions {\sf (i)} and {\sf (b1)} and recall that $\lambda_{i,j}$ is compact, and
\item $\dist_{(\Rcal,f_0)}(z_0,b\overline{\Mcal})>\rho$; see the strict inequality (\ref{eq:lemma}).
\end{enumerate}

Furthermore, if the above approximations are close enough and the neighborhood $V$ of $K$ (containing the image $\phi(\overline \Rcal)$) is small enough, then taking into account properties {\sf (b1)}--{\sf (b7)} and {\sf (c1)}--{\sf (c4)}, one can easily find simply connected neigh\-bor\-hoods $U_{i,j}^3\Subset U_{i,j}^2\Subset U_{i,j}^1$ of the point $p_{i,j}$ in $\overline{\Rcal}\setminus \overline{\Mcal}$ for any $(i,j)\in\{1,\ldots,\igot\}\times\z_\jgot$ satisfying the following properties  (see Figs.\ \ref{fig:Phi} and \ref{fig:R}):
\begin{enumerate}[\sf ({d}1)]
\item $\overline{U}_{i,j}^1\cap \overline{U}_{i,k}^1=\emptyset$ if $k\neq j$,
\item $\overline{U}_{i,j}^1\cap \alpha_{i,k}=\emptyset$ if $k\notin\{j,j+1\}$,
\item $\overline{U}_{i,j}^1\cap \alpha_{i,k}$ is a connected compact Jordan arc for  $k\in\{j,j+1\}$,
\item $\beta_{i,j}^k :=\overline{\alpha_{i,j}\setminus (U_{i,j}^{k+1}\cup U_{i,j-1}^{k+1})}$ are connected compact Jordan arcs for $k\in\{1,2\}$, and $f_0(\beta_{i,j}^1)\subset\Uscr_{i,j}$; see {\sf (a4)},
\item $\sigma_{i,j}^1\subset \overline{U_{i,j}^1\setminus U_{i,j}^2}$, $\sigma_{i,j}^2\subset\overline{U}_{i,j}^2$ (see (\ref{eq:sigma})), and $\phi^{-1}(p_{i,j})\in (b\overline{U}_{i,j}^1)\cap\Rcal$,
\item $f_0(\overline{U}_{i,j}^3)\subset\b(\hat{s})\setminus \overline{\b}(\hat{s}-\hat{\epsilon})$ and $(f_0(\overline{U}_{i,j}^3)+\langle f(p_{i,k})\rangle^\bot)\cap \overline{\b}(\hat{s}-\hat{\epsilon})=\emptyset$ for  $k\in\{j,j+1\}$; see {\sf (c1)},
\item $\left( f_0(\overline{U}_{i,j}^2) + \langle f(p_{i,k})\rangle^\bot \right) \cap \overline{\b}(s-\epsilon)=\emptyset$ for all $k\in \{j,j+1\}$; see {\sf (b6)},
\item $f_0(\overline{U_{i,j}^1\setminus U_{i,j}^2})\subset \Uscr_{i,j}\cap\Uscr_{i,j+1}$; see {\sf (b4)}, and
\item if $\gamma\subset \overline{U}_{i,j}^1$ is an arc connecting $\Rcal\setminus U_{i,j}^1$ and $\overline{U}_{i,j}^2$, and $J\subset \gamma$ is a Borel measurable subset, then (see {\sf (b5)}) we have 
\begin{eqnarray*}
\min\{\length(\pi_{i,j}(f_0(J))),\length(\pi_{i,j+1}(f_0(J)))\}&+&\\
\min\{\length(\pi_{i,j}(f_0(\gamma \setminus J))),\length(\pi_{i,j+1}(f_0(\gamma\setminus J)))\}&>&\delta.
\end{eqnarray*}
\end{enumerate}
In fact, the above properties hold if $U_{i,j}^3$ is chosen sufficiently small around the point $p_{i,j}$, $U_{i,j}^2$ is sufficiently small around the arc $\sigma_{i,j}^2$, and $U_{i,j}^1$ is sufficiently small around the arc $\sigma_{i,j}$ (see (\ref{eq:sigma})). 

\begin{figure}[ht]
    \begin{center}
    \resizebox{0.70 \textwidth}{!}{\includegraphics{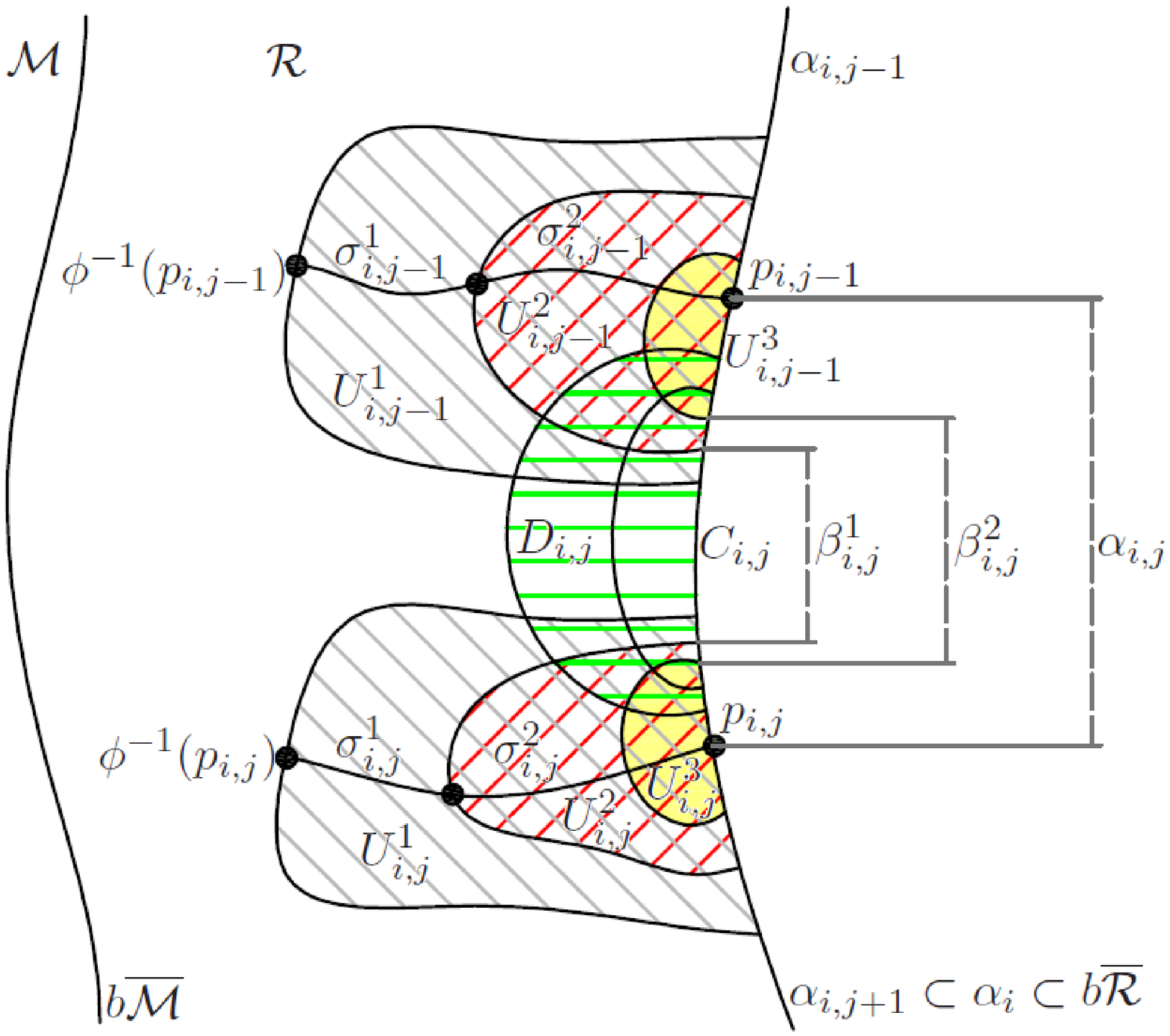}}
    %\scalebox{0.51}{\includegraphics{R}}
        \end{center}
        \vspace{-0.25cm}
\caption{The sets in $\overline{\Rcal}\setminus \overline{\Mcal}$.}\label{fig:R}
\end{figure}

%%%%%%%%%%
%%%%%%%%%%
%%%%%%%%%%
%%%%%%%%%%
%%%%%%%%%%
%%%%%%%%%%

\subsection{Stretching from the arcs $\alpha_{i,j}$}\label{sec:step3}

We shall now stretch the images of the central parts of the arcs $\alpha_{i,j}$ (away from the two endpoints) close to the sphere of radius $\hat s$ in $\c^2$ in order to fulfill the condition {\sf (L2)}. 

For each $(i,j)\in\{1,\ldots,\igot\}\times\z_\jgot$ we choose a smoothly bounded closed disc $\overline{D}_{i,j} \subset\overline{\Rcal}\setminus \overline{\Mcal}$ such that the following hold (see Fig.\ \ref{fig:R}):
\begin{enumerate}[\sf ({e}1)]
\item $\overline{D}_{i,j}\cap\overline{D}_{i,k}=\emptyset$ if $j\neq k$,
\item $\overline{D}_{i,j}\cap \alpha_{i,j}$ is a compact connected Jordan arc that contains $\beta_{i,j}^2$ in its relative interior, %\Subset \overline{D}_{i,j}\cap\alpha_{i,j}$, 
and $\overline{D}_{i,j}\cap \alpha_{i,k}=\emptyset$ for all $k\neq j$,
\item $\overline{D}_{i,j}\cap \sigma_{i,k}=\emptyset$ for all $k\in \z_\jgot$, and
\item $f_0\big(\overline{D_{i,j}\setminus (U_{i,j-1}^2\cup U_{i,j}^2)}\big)\subset \Uscr_{i,j}$. (Here $f_0$ is the map (\ref{eq:f0}) constructed in the previous subsec.) 
\end{enumerate}
The existence of such discs $D_{i,j}$ is trivially implied by properties {\sf (d4)} and {\sf (d8)}. 
 
At this point we use an approximate solution of a Riemann-Hilbert problem. By property {\sf (e2)} and the second part of {\sf (d6)} we can easily find  a continuous map $g_{i,j}\colon b\overline{D}_{i,j}\times\overline{\d}\to\c^2$ satisfying the following require\-ments:
\begin{enumerate}[$i)$]
\item $g_{i,j}(p,\cdot)\in \Ascr(\d)^2$ and $g_{i,j}(p,0)= f_0(p)$ for all $p\in b\overline{D}_{i,j}$,
\item $g_{i,j}(p,\overline{\d})\subset f_0(p)+\langle f(p_{i,j})\rangle^\bot$ for all $p\in b\overline{D}_{i,j}$,
\item $g_{i,j}(p,b\overline{\d})\subset \b(\hat{s})\setminus \overline{\b}(\hat{s}-\hat{\epsilon})$ for all $p\in b\overline{D}_{i,j}\cap \alpha_{i,j}$, and
\item $g_{i,j}(p,\cdotp) \equiv f_0(p)$ is the constant disc for all $p\in \overline{b {D}_{i,j}\setminus \beta_{i,j}^2}$.
\end{enumerate}

By using a conformal diffeomorphism $\overline{D}_{i,j} \stackrel{\cong}{\longrightarrow} \d$ onto the unit disc, Lemma \ref{lem:Hilbert} gives a map $h_{i,j}\in\Ascr(D_{i,j})^2$ satisfying the following conditions:
\begin{enumerate}[\sf ({f}1)]
\item $h_{i,j}(p)\in \b(\hat{s})\setminus \overline{\b}(\hat{s}-\hat{\epsilon})$ for all $p\in \overline{D}_{i,j}\cap \alpha_{i,j}= b\overline{D}_{i,j}\cap \alpha_{i,j}$; see $iii)$,
\item $\pi_{i,j}\circ h_{i,j}$ is close to $\pi_{i,j}\circ f_0$ on $\overline{D}_{i,j}$; see $ii)$, and
\item $h_{i,j}$ is close to $f_0$ outside a small open neighborhood $C_{i,j}$ of $\beta_{i,j}^2$ in $\overline{D}_{i,j}$ with $b\overline{C}_{i,j} \cap b \overline{D}_{i,j}\subset\alpha_{i,j}$; see $iv)$.
\end{enumerate}
The sets $C_{i,j}\subset D_{i,j}$ and the arcs $\beta^2_{i,j}\subset \alpha_{i,j}$ are illustrated on Fig.\ \ref{fig:R}.

\subsection{Gluing $f_0$ and $h_{i,j}$}\label{sec:step4}
Consider the domains $D_0,D_1$ in $\Rcal$ defined by   
\[
D_0=\Rcal\setminus \cup_{(i,j)\in\{1,\ldots,\igot\}\times\z_\jgot} \overline{C}_{i,j}
\quad\text{and}\quad
D_1=\cup_{(i,j)\in\{1,\ldots,\igot\}\times\z_\jgot} D_{i,j}.
\]
We may assume that the pair $A=\overline D_0$ and $B=\overline D_1$ is a smooth Cartan decomposition of $\overline \Rcal$ (see Def.\ \ref{def:Cartan}). (Smoothness of $bA$ is easily insured by a suitable choice of the neighborhoods $C_{i,j}$ of $\beta_{i,j}^2$, and the separation property follows from the definition of the sets $C_{i,j}$ and $D_{i,j}$.)  

If the approximation of $f_0$ by $h_{i,j}$ on $\overline D_0 \cap \overline{D}_{i,j}$ is close enough (which is guaranteed by {\sf (f3)}), one can apply Corollary \ref{linear-gluing} to obtain a holomorphic map $\hat{f}\colon \overline{\Rcal}\to\c^2$ which is as close as desired to $f_0$ on $\overline D_0$, and to $h_{i,j}$ on $\overline{D}_{i,j}$, for all $(i,j)$. (The cited corollary furnishes a map in $\Ascr(\Rcal)^2$, but we can approximate it by a holomorphic map.)

\begin{remark}
\label{gluing-by-sprays}
In the general case, with $\c^2$ replaced by an arbitrary complex manifold $X$ (see Sec.\ \ref{sec:Stein}), we can instead apply Lemma \ref{lem:Gspray} to obtain a holomorphic map $\hat{f} \colon \overline{\Rcal}\to X$ which is close to the map $f_0$ on $A$, and is close to $h_{i,j}$ on $\overline{C}_{i,j}$ for all $(i,j)$. This can be done provided that we have a suitable procedure for approximating maps $A\to X$ of class $\Ascr(A,X)$, uniformly on $A \cap B$ and uniformly with respect to a parameter $t\in P\subset\c^m$, by maps $B\to X$ of class $\Ascr(B,X)$. We proceed as follows. By Lemma \ref{lem:Espray} we can embed $f_0\colon A \to X$ as the core map $f_0=F_A(0,\cdot)$ in a dominating spray of maps $F_A\colon P \times A \to X$, where $P$ is an open set in $\c^m$ containing the origin. By the approximation procedure we find a spray of maps $F_B \colon P \times B \to X$ such that 
(up to a shrinking of $P$) the map $F_B(t,\cdot)$ is as close as desired to $F_A(t,\cdotp)$ on $A \cap B$ for every $t\in P$. 
Lemma \ref{lem:Gspray} furnishes a new spray of maps $P\times\overline{\Rcal}\to X$ whose core map $\hat{f} \colon \overline{\Rcal}\to X$ is uniformly close to $f_0$ on $A$, and is close to $h_{i,j}$ on $\overline{C}_{i,j}$ for all $(i,j)$.
\end{remark}

%%%%%%%%%%
%%%%%%%%%%
%%%%%%%%%%
%%%%%%%%%%
%%%%%%%%%%
%%%%%%%%%%

\subsection{Checking the properties of $\hat{f}$}\label{sec:step5}

By approximation and general position argument we may assume that $\hat f$ is a holomorphic immersion of a neighborhood of $\overline \Rcal$ in $\widehat \Rcal$ to $\c^2$. Furthermore, if all the approximations in our construction (namely,
\begin{enumerate}[\sf ({A}1)]
\item of $f_0$ by $h_{i,j}$ on $\overline{D}_{i,j}\setminus C_{i,j}$,
\item of $f_0$ by $\hat{f}$ on $\overline D_0$,
\item of $h_{i,j}$ by $\hat{f}$ on $\overline{C}_{i,j}$, and
\item of $\pi_{i,j}\circ f_0$ by $\pi_{i,j}\circ h_{i,j}$ over $\overline{D}_{i,j}$, for all $(i,j)$;
\end{enumerate}
see the above subsections and {\sf (f2)}) are sufficiently close, then $\hat{f}$ satisfies the following properties
which we verify item by item:
\begin{itemize}%[\sf ({L}1)]
\item $\|\hat{f}-f\|_{1,\overline{\Mcal}}<\hat{\epsilon}$.
\end{itemize}

Indeed, just notice that $\|\hat{f}-f_0\|_{1,\overline{\Mcal}}\approx 0$ (see {\sf (A2)} and observe that $\overline{\Mcal}\subset \overline D_0$) and take into account {\sf (c2)}.

\begin{itemize}%[\sf ({L}1)]
\item $\hat{f}(b\overline{\Rcal})\subset \b(\hat{s})\setminus \overline{\b}(\hat{s}-\hat{\epsilon})$. 
\end{itemize}

Indeed, let $p\in b\overline{\Rcal}$. If $p\in C_{i,j}$ for some $(i,j)$, then {\sf (A3)} and {\sf (f1)} give that $\hat{f}(p) \approx h_{i,j}(p)\in \b(\hat{s})\setminus \overline{\b}(\hat{s}-\hat{\epsilon})$. If on the other hand $p\in \alpha_{i,j}\setminus C_{i,j}\subset \overline{U}_{i,j}^3\cup\overline{U}_{i,j+1}^3$ (see Fig.\ \ref{fig:R}), then properties {\sf (A2)} and {\sf (d6)} insure that $\hat{f}(p) \approx f_0(p)\in \b(\hat{s})\setminus \overline{\b}(\hat{s}-\hat{\epsilon})$ as well.

\begin{itemize}%[\sf ({L}1)]
\item $\hat{f}(\overline{\Rcal}\setminus \Mcal)\cap\overline{\b}(s-\epsilon)=\emptyset$. 
\end{itemize}

Indeed, let $p\in \overline{\Rcal}\setminus \Mcal$. If $p\notin \overline{D}_{i,j}$, then {\sf (A2)} and {\sf (c3)} imply that $\hat{f}(p) \approx f_0(p)\notin \overline{\b}(s-\epsilon)$. Assume now that $p\in \overline{D}_{i,j}$. Then either $p\in C_{i,j}$ and {\sf (A3)} gives $\hat{f}(p) \approx h_{i,j}(p)$, or $p\in \overline{D}_{i,j}\setminus C_{i,j}$ and {\sf (A2)} insures that $\hat{f}(p) \approx f_0(p)$. In either case, property {\sf (A4)} guarantees that $\pi_{i,j}(\hat{f}(p))\approx \pi_{i,j}(f_0(p))$ for $p\in \overline{D}_{i,j}$. Assuming that the approximation is close enough (and using compactness of $\overline{\Rcal}\setminus \Mcal$), it thus suffices to verify that $\pi_{i,j}(f_0(p))$ does not belong to the disc $\overline{\d}(s-\epsilon)$ (the $\pi_{i,j}$-projection of the ball $\overline{\b}(s-\epsilon)$). If $p\in \overline{D}_{i,j} \cap\overline{U}_{i,k}^2$, $k\in\{j-1,j\}$, then $\pi_{i,j}(f_0(p)) \in \pi_{i,j}(f_0(\overline{U}_{i,k}^2))$
which is disjoint from $\overline{\d}(s-\epsilon)$ by {\sf (d7)}. Finally, if $p\in \overline{D}_{i,j}\setminus (\overline{U}_{i,j-1}^2\cup\overline{U}_{i,j}^2)$, then {\sf (e4)} gives that $f_0(p)\in \Uscr_{i,j}$ and so $\pi_{i,j}(f_0(p))\notin \overline{\d}(s-\epsilon)$ as well; see (\ref{eq:far}) and {\sf(a3)}. 

\begin{itemize}%[\sf ({L}1)]
\item $\dist_{(\Rcal,\hat{f})}(z_0,b\overline{\Rcal})>\hat{\rho}=\rho+\delta$.
\end{itemize}

By {\sf (A2)} and {\sf (c4)} it suffices to check that $\dist_{(\Rcal,\hat{f})}(b\overline{\Mcal},b\overline{\Rcal})>\delta$. Let $\gamma$ be any curve in $\overline \Rcal\setminus \Mcal$ connecting $b\overline{\Mcal}$ and $b\overline{\Rcal}$, and let us show that $\length(\hat{f}(\gamma))>\delta$. 

Assume first that $\gamma\cap \overline{U}_{i,j}^2\neq\emptyset$ for some $(i,j)$. Then there exists a subarc $\hat{\gamma}\subset \overline{U}_{i,j}^1$ connecting $\Rcal\setminus U_{i,j}^1$ and $\overline{U}_{i,j}^2$; see Fig.\ \ref{fig:R}. By {\sf (A2)} and {\sf (A3)} one has
\begin{eqnarray*}
\length(\hat{f}(\hat{\gamma}))& \approx & \length (f_0(\hat{\gamma}\cap \overline{D}_0))
\\
& & +\ \length (h_{i,j}(\hat{\gamma}\cap \overline{C}_{i,j})) + \length (h_{i,j+1}(\hat{\gamma}\cap \overline{C}_{i,j+1}))\\
& \geq & \length (f_0(\hat{\gamma}\cap \overline{D}_0)) +\ \length (\pi_{i,j}(h_{i,j}(\hat{\gamma}\cap \overline{C}_{i,j}))) \\
 & & +\ \length (\pi_{i,j+1}(h_{i,j+1}(\hat{\gamma}\cap \overline{C}_{i,j+1}))),
\end{eqnarray*} 
and by {\sf (A4)} and {\sf (d9)}, the above is
\begin{eqnarray*}
& \approx & \length (f_0(\hat{\gamma}\cap \overline{D}_0))
\\
& & +\ \length (\pi_{i,j}(f_0(\hat{\gamma}\cap \overline{C}_{i,j}))) + \length (\pi_{i,j+1}(f_0(\hat{\gamma}\cap \overline{C}_{i,j+1}))) > \delta.
\end{eqnarray*}
Therefore, $\length(\hat{f}(\gamma)) \ge \length(\hat{f}(\hat{\gamma}))>\delta$ as claimed.

Assume now that $\gamma\cap\overline{U}_{i,j}^2=\emptyset$ for all $(i,j)\in\{1,\ldots,\igot\}\times\z_\jgot$. Then there exist $(i,j)$ and a subarc $\hat{\gamma}\subset \overline{D_{i,j}\setminus (U_{i,j-1}^2\cup U_{i,j}^2)}$ connecting a point $p_0\in \overline{D_{i,j}\setminus (U_{i,j-1}^2\cup U_{i,j}^2\cup C_{i,j})}$ to a point $p_1\in \beta_{i,j}^1$; see Fig.\ \ref{fig:R}. In this case, {\sf (A2)} and {\sf (A3)} imply that $\hat{f}(p_0)\approx f_0(p_0)$ and $\hat{f}(p_1)\approx h_{i,j}(p_1)$. By {\sf (e4)} one has $\{f_0(p_0),f_0(p_1)\}\subset\Uscr_{i,j}$, by {\sf (f1)} one has  $h_{i,j}(p_1)\in \b(\hat{s})\setminus \overline{\b}(\hat{s}-\hat{\epsilon})$, and by {\sf (A4)} one has $\pi_{i,j}(h_{i,j}(p_1))\approx \pi_{i,j}(f_0(p_1))$; hence inequality (\ref{eq:Pytha}) gives that $\dist(f_0(p_0),h_{i,j}(p_1))>\delta$. Assuming as we may that $\hat{f}(p_0)$ and $\hat{f}(p_1)$ are close enough to $f_0(p_0)$ and $h_{i,j}(p_1)$, respectively, we thus get $\length(\hat{f}(\gamma)) \ge \length(\hat{f}(\hat{\gamma}))>\delta$.

Thus $\hat{f}$ satisfies the conclusion of Lemma \ref{lem:main} and the proof is complete.

%%%%%%%%%%
%%%%%%%%%%
%%%%%%%%%%
%%%%%%%%%%
%%%%%%%%%%
%%%%%%%%%%

\section{Complete proper immersions to Euclidean balls} \label{sec:proof}

We now show how Lemma \ref{lem:main} implies Theorem \ref{th:intro2}.

Let $\overline{\Rcal}$ be a bordered Riemann surface and $h \colon \overline{\Rcal}\to\c^m$ $(m>1)$ be a holomorphic map whose image $h(\overline{\Rcal})$ is contained in an open ball $B \subset \c^m$. Given a compact set $K\subset \Rcal$ and a number $\eta>0$, we must find a complete proper holomorphic immersion $\hat h\colon \Rcal \to B$ (embedding if $m>2$) such that $\|\hat{h}-h\|_{1,K}<\eta$.

We focus on immersions; the necessary modifications to find embeddings in $\c^m$ for $m>2$ are indicated at the end.

By a translation and a dilation of coordinates we may assume that $B=\b$ is the unit ball. By general position we can replace $h$ by an immersion (embedding if $m>2$) such that $h(b\overline{\Rcal})$ does not contain the origin of $\c^m$. Pick numbers $0< \xi < r<1$ so that $h(b\overline{\Rcal}) \subset \b(r)\setminus \overline{\b}(r-\xi)$. Choose a bordered domain $\Ncal\in\Bscr(\Rcal)$ in $\Rcal$ such that  
\[
	K\subset \Ncal \quad \hbox{and} \quad 
	h(\overline{\Rcal} \setminus  \Ncal)\subset \b(r)\setminus \overline{\b}(r-\xi).
\]
Set $c :=\sqrt{6(1-r^2)}/\pi>0$. Fix a point $\zeta_0\in \Ncal$ and define sequences 
$\rho_n,r_n>0$ $(n\in \z_+)$ recursively as follows:  
\begin{equation}\label{eq:r-rho}
\rho_0=\dist_{(\Rcal,h)}(\zeta_0,b\overline{\Ncal}),\enskip
\rho_n=\rho_{n-1}+\frac{c}{n},\enskip r_0=r, \enskip r_n=\sqrt{r_{n-1}^2+\frac{c^2}{n^2}}.
\end{equation}
It is immediate that
\begin{equation}\label{eq:diverges}
\lim_{n\to\infty}\rho_n=+\infty
\end{equation}
and
\begin{equation}\label{eq:converges}
\lim_{n\to\infty}r_n=1.
\end{equation}
To get (\ref{eq:converges}), observe that $r_n^2=r_{n-1}^2+\frac{c^2}{n^2}$ and hence
\[
	\lim_{n\to\infty}r_n^2 = r^2 + \sum_{n=1}^\infty \frac{c^2}{n^2} = 
	r^2+ c^2 \frac{\,\,\pi^2}{6} = r^2 + \frac{6(1-r^2)}{\pi^2} \frac{\,\,\pi^2}{6} =  1.
\]	 

Set $(\Ncal_0,h_0,\xi_0)=(\Ncal,h,\xi)$. We shall inductively construct sequences of bordered domains $\Ncal_n\in\Bscr(\Rcal)$, holomorphic immersions $h_n\in\Iscr(\Rcal)$, and constants $\xi_n>0$ $(n\in\n)$ satisfying the following conditions:
\begin{enumerate}[\sf (a$_n$)]
\item $\Ncal_{n-1}\Subset \Ncal_n$, 
\item $\displaystyle \xi_n <\min\left\{ \xi_{n-1}\,,\, \frac{\eta}{2^{n}} \;,\; \frac{1}{2^{n}}\min\left\{ \min_{\Ncal_k} |dh_k| \colon k=1,\ldots,n-1 \right\}\right\}$; this minimum is positive since $dh_k\ne 0$ on $\overline \Rcal$ for all $k\in\{1,\ldots,n-1\}$,
\item $\|h_n-h_{n-1}\|_{1,\overline{\Ncal}_{n-1}}<\xi_n$,
\item $h_n(\overline{\Rcal}\setminus \Ncal_n)\subset \b(r_n)\setminus \overline{\b}(r_n-\xi_n)$,
\item $h_n(\overline{\Rcal}\setminus \Ncal_{n-1})\cap \overline{\b}(r_{n-1}-\xi_{n-1})=\emptyset$, and
\item $\dist_{(\Rcal,h_n)}(\zeta_0,b\overline{\Ncal}_n)>\rho_n$.
\end{enumerate}
We will additionally insure that 
\begin{equation}\label{eq:cupN}
\Rcal=\cup_{n\in\n}\Ncal_n.
\end{equation}

We proceed by induction. To begin, pick a number $\xi_1>0$ so that {\sf (b$_1$)} holds. Now Lemma \ref{lem:main} can be applied to the data
\[
(\Mcal \,,\, z_0 \,,\, f \,,\, \epsilon \,,\, \rho \,,\, s \,,\, \hat{\epsilon} \,,\, \delta)= (\Ncal_0 \,,\, \zeta_0 \,,\, h_0 \,,\, \xi_0 \,,\, \rho_0 \,,\,r_0 \,,\, \xi_1 \,,\,c),
\]
and define $h_1\in\Iscr(\Rcal)$ as the immersion $\hat{f}$ furnished by Lemma \ref{lem:main}. Properties {\sf (c$_1$)}, {\sf (e$_1$)} correspond to {\sf (L1)}, {\sf (L3)} in Lemma \ref{lem:main}, respectively; take into account (\ref{eq:r-rho}). Moreover, {\sf (L2)} and {\sf (L4)} give that $h_1(b\overline{\Rcal})\subset \b(r_1)\setminus \overline{\b}(r_1-\xi_1)$ and $\dist_{(\Rcal,h_1)}(\zeta_0,b\overline{\Rcal})>\rho_1$ (see (\ref{eq:r-rho}) again). Since $b\overline{\Rcal}$ is compact, it follows that {\sf (d$_1$)} holds for any sufficiently large domain $\Ncal_1\in\Bscr(\Rcal)$ satisfying {\sf (a$_1$)}.

For the inductive step, assume that for some $n\ge 2$ we already have $(\Ncal_{j},h_{j},\xi_{j})$ satisfying properties {\sf (a$_j$)}--{\sf (f$_j$)} for all $j\in\{1,\ldots,n-1\}$. Choose any positive number $\xi_n>0$ satisfying {\sf (b$_n$)}. Property {\sf (d$_{n-1}$)} allows to apply Lemma \ref{lem:main} to the data
\[
(\Mcal \,,\, z_0 \,,\, f \,,\, \epsilon \,,\, \rho \,,\, s \,,\, \hat{\epsilon} \,,\, \delta)= (\Ncal_{n-1} \,,\, \zeta_0 \,,\, h_{n-1} \,,\, \xi_{n-1} \,,\, \rho_{n-1} \,,\,r_{n-1} \,,\, \xi_n \,,\,\frac{c}{n}).
\]
Let $h_n$ denote the immersion $\hat f$ furnished by Lemma \ref{lem:main}. By choosing a sufficiently large domain $\Ncal_n \in \Bscr(\Rcal)$, the triple $(\Ncal_n,h_n,\xi_n)$ meets all requirements {\sf (a$_n$)}--{\sf (f$_n$)}. 

Finally, in order to guarantee (\ref{eq:cupN}), one simply chooses the bordered domain $\Ncal_n$ large enough in each step of the inductive process. This concludes the construction of the sequence $\{(\Ncal_n,h_n,\xi_n)\}_{n\in\n}$.

From {\sf (c$_n$)}, {\sf (b$_n$)}, and (\ref{eq:cupN}) we infer that the sequence $\{h_n\colon \overline {\Rcal}\to\c^2\}_{n\in\n}$ converges uniformly on compacta in $\Rcal$ to a holomorphic map $\hat{h} \colon \Rcal\to\c^2$. Let us show that $\hat{h}$ satisfies the desired properties.

\begin{itemize}
\item $\hat{h}\colon \Rcal\to\c^2$ is an immersion.
\end{itemize}

Indeed, let $p\in \Rcal$. Pick $n_0\in\n$ so that $p\in\Ncal_{n_0}$. From {\sf (c$_n$)} and {\sf (b$_n$)} one has
\begin{eqnarray*}
|d\hat{h}(p)| & \geq & |dh_{n_0}(p)|-\sum_{n>n_0} |dh_{n}(p)-dh_{n-1}(p)|
\\
  & \geq & |dh_{n_0}(p)|-\sum_{n>n_0} \|h_{n}-h_{n-1}\|_{1,\Ncal_{n_0}}
\\
  & \geq & |dh_{n_0}(p)|-\sum_{n>n_0} \xi_n
\\
  & \geq & |dh_{n_0}(p)|-\sum_{n>n_0} \frac{1}{2^n} |dh_{n_0}(p)|> \frac12 \, |dh_{n_0}(p)|>0;
\end{eqnarray*}
recall that $h_{n_0}\in\Iscr(\Rcal)$. This shows that $\hat{h}$ is an immersion as claimed.

\begin{itemize}
\item $\hat{h}\colon \Rcal\to\c^2$ is complete.
\end{itemize}

Indeed, arguing as above, one infers that
\[
\dist_{(\Rcal,\hat{h})}(\zeta_0,b\overline{\Ncal}_n)>\frac12\,\dist_{(\Rcal,h_n)}(\zeta_0,b\overline{\Ncal}_n)> \frac{\rho_n}2\quad\forall n\in\n;
\]
see {\sf (f$_n$)}. Taking limits in the above inequality as $n$ goes to infinity, one gets the completeness of $\hat{h}$ from (\ref{eq:diverges}).

\begin{itemize}
\item $\|\hat{h}-h\|_{1,\overline{\Ncal}}<\eta$. 
\end{itemize}

This follows trivially from {\sf (b$_n$)} and {\sf (c$_n$)}.

\begin{itemize}
\item $\hat{h}(\Rcal)\subset\b$ and $\hat{h}\colon \Rcal\to\b$ is proper. 
\end{itemize}

Indeed, let $p\in\Rcal$. From {\sf (d$_n$)} and the Maximum Principle, $|h_n(p)|<r_n$ for every $n\in\n$. By (\ref{eq:converges}), taking limits as $n$ goes to infinity, one has  $|\hat{h}(p)|\leq1$ and, again by the Maximum Principle, the first assertion holds.

For the properness it suffices that $\hat{h}^{-1}(\overline{\b}(t))$ is a compact subset of $\Rcal$ for any $0<t<1$. Observe first that {\sf (b$_n$)} and {\sf (c$_n$)} imply  
\begin{equation}\label{eq:hath-h}
\|\hat{h}-h_n\|_{0,\Ncal_n}<\eta/2^n\quad \forall n\in\n.
\end{equation} 
Since $r_n\to 1$ and $\xi_n\to 0$ as $n\to\infty$, we can take $n_0\in\n$ large enough so that 
\begin{equation}\label{eq:r-t}
t+\xi_{n-1}+\eta/2^n<r_{n-1}\quad \forall n\geq n_0.
\end{equation}
Combining {\sf (e$_n$)} and (\ref{eq:hath-h}) one infers that
\[
\hat{h}(\overline{\Ncal}_n\setminus \Ncal_{n-1})\cap \overline{\b}(r_{n-1}-\xi_{n-1}-\eta/2^n)=\emptyset\quad \forall n\geq n_0.
\]
Therefore, (\ref{eq:r-t}) gives that 
\[
(\overline{\Ncal}_n\setminus \Ncal_{n-1})\cap \hat{h}^{-1}(\overline{\b}(t))=\emptyset\quad \forall n\geq n_0,
\]
hence $\hat{h}^{-1}(\overline{\b}(t))\subset \Ncal_{n_0}$ is compact in $\Rcal$ and we are done.

This completes the proof of Theorem \ref{th:intro2} for the case of immersions. If $m>2$, we construct the sequence $h_n\colon \overline \Rcal \to \b\subset \c^m$ so that, in addition, $h_n$ is an embedding on the bordered domain $\overline \Ncal_n \subset \Rcal$ for every $n$; this is possible by applying the general position argument at each step. Furthermore, if the approximation of $h_n$ by $h_{n+1}$ is sufficiently close on $\overline \Ncal_n$ (which is insured by choosing the sequence $\xi_n>0$ to converge to zero fast enough), then the limit map $\hat h=\lim_{n\to\infty} h_n$ is also an embedding. This can be done, for instance, taking in addition to {\sf (b$_n$)},
\[
   \xi_n<\frac1{2n^2} \inf\left\{ |h_{n-1}(p)-h_{n-1}(q)|\colon p,q\in \Ncal_{n-1},\, 
   {\sf d}(p,q)>\frac1{n}\right\},
\]
where ${\sf d}(\cdot,\cdot)$ is any fixed Riemannian metric in $\Rcal$; see the proof of Theorem 4.5 in \cite{AL3} or \cite{DF}.

%%%%%%%%%%%%%%%%%
%%%%%%%%%%%%%%%%%
%%%%%%%%%%%%%%%%%

\section{Complete proper immersions to Stein manifolds} \label{sec:Stein}

By combining our methods with those in \cite{DF} we also find complete proper holomorphic immersions of any bordered Riemann surface to an arbitrary Stein manifold of dimension $>1$.

\begin{theorem}
\label{th:Stein}
Let $X$ be a Stein manifold of dimension $>1$ endowed with a hermitian metric $ds^2_X$, and let $\overline \Rcal$ be a bordered Riemann surface. Every holomorphic map $f\colon \overline \Rcal\to X$ can be approximated, uniformly on compacta in $\Rcal$, by proper complete holomorphic immersions $\hat f\colon \Rcal\to X$. If $\dim X\ge 3$ then $\hat f$ can be chosen an embedding. 
\end{theorem}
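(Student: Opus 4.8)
The plan is to adapt the proof of Theorem~\ref{th:intro2} given in Sec.~\ref{sec:proof}, replacing the Euclidean ball $\b \subset \c^m$ by a suitable exhaustion of the Stein manifold $X$ and replacing Lemma~\ref{lem:main} by its analogue in which the linear gluing (Corollary~\ref{linear-gluing}) is replaced by the gluing of holomorphic sprays (Lemma~\ref{lem:Gspray}), exactly as indicated in Remark~\ref{gluing-by-sprays}. First I would fix a smooth strictly plurisubharmonic exhaustion function $\tau\colon X\to\r$ (which exists since $X$ is Stein), and after a small perturbation of $f$ and a shift of $\tau$ arrange that $f(\overline\Rcal)\subset\{\tau<c_0\}$ and that $\tau$ has no critical value in a neighbourhood of the boundary level set through $f(b\overline\Rcal)$. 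The sublevel sets $\{\tau\le c\}$ play the role of the closed balls $\overline{\b}(s)$, and the geometry of $\c^2$ near a sphere used in Sec.~\ref{sec:step1}---namely that an immersion whose boundary lies near a sphere of radius $s$ admits a deformation pushing the boundary out to a sphere of radius $\hat s=\sqrt{s^2+\delta^2}$ while increasing the intrinsic boundary distance by at least $\delta$---must be replaced by the corresponding estimate near a (non-critical, hence locally convexifiable after a change of coordinates) level set of $\tau$. This is precisely the kind of estimate established in \cite{DF}: one works in local holomorphic coordinates in which the level set of $\tau$ is strictly convex, applies the Riemann--Hilbert / exposing-points machinery there, and controls both the outward push (measured by $\tau$) and the gained length in a Pythagoras-type relation. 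Since all of Sec.~\ref{sec:step1}--\ref{sec:step5} is written using only the Riemann--Hilbert Lemma~\ref{lem:Hilbert}, the exposing-points theorem \cite[Thm.~2.3]{FW}, and a gluing step, and since Lemma~\ref{lem:Hilbert} and \cite[Thm.~2.3]{FW} are intrinsic statements about maps into an arbitrary complex manifold (resp. about deformations of $\overline\Rcal$), the only genuinely new ingredient is the gluing, handled by Remark~\ref{gluing-by-sprays}.

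The key steps, in order, are the following. (1) State and prove a Stein analogue of Lemma~\ref{lem:main}: given $\Mcal\in\Bscr(\Rcal)$, a holomorphic immersion $f\colon\overline\Rcal\to X$ with $f(\overline\Rcal\setminus\Mcal)$ contained in a thin shell $\{c-\epsilon<\tau\circ f<c\}$ around a non-critical level $\{\tau=c\}$, and $\dist_{(\Rcal,f)}(z_0,b\overline\Rcal)>\rho$, produce $\hat f$ with $\hat f$ close to $f$ on $\overline\Mcal$, with $\hat f(b\overline\Rcal)$ in a thin shell around $\{\tau=\hat c\}$ for some $\hat c>c$, with $\hat f(\overline\Rcal\setminus\Mcal)\subset\{\tau>c-\epsilon\}$, and with $\dist_{(\Rcal,\hat f)}(z_0,b\overline\Rcal)>\rho+\delta$, where the relation between $\hat c-c$ and $\delta$ is the one dictated by the local convex geometry of $\tau$ near $\{\tau=c\}$ (this replaces the clean $\hat s=\sqrt{s^2+\delta^2}$, but still gives a fixed minimal length gain $\delta$ per step). (2) Run the recursion of Sec.~\ref{sec:proof}: choose $\rho_n\to\infty$ with $\rho_n-\rho_{n-1}=\delta_n$ and a corresponding increasing sequence of levels $c_n$; because $\tau$ is an exhaustion and each non-critical level can be pushed out by a definite amount in $\tau$ per unit of $\delta$, one can arrange $c_n\to c_\infty$ where $\{\tau<c_\infty\}=X$ or, more carefully, choose the $\delta_n$ (summing to $\infty$) so that the resulting $c_n$ exhaust $X$---and indeed there is freedom to let $c_n\to+\infty$ by not making the $\delta_n$ too small, which is exactly what one wants since $X$ need not be bounded. (3) Pass to the limit $\hat f=\lim h_n$, verifying as in Sec.~\ref{sec:proof} that $\hat f$ is a holomorphic immersion (via the summable lower bounds on $|dh_n|$), that it is complete (via $\dist_{(\Rcal,\hat f)}(\zeta_0,b\overline\Ncal_n)>\rho_n/2\to\infty$), and that it is proper into $X$ (via $\tau\circ\hat f$ being eventually $>t$ on $\overline\Ncal_n\setminus\Ncal_{n-1}$ for every $t$, using $c_n\to\infty$ or $c_n\to c_\infty=\sup\tau$). (4) For $\dim X\ge 3$, insert the general-position argument at each step, as at the end of Sec.~\ref{sec:proof}, so that each $h_n$ is an embedding on $\overline\Ncal_n$ and the limit remains an embedding; here one uses that a generic holomorphic perturbation of a map from a (real) $2$-dimensional source into a manifold of complex dimension $\ge 3$ is an embedding.

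The main obstacle I expect is step~(1), specifically carrying out the exposing-points construction of Sec.~\ref{sec:step2} and the Riemann--Hilbert deformation of Sec.~\ref{sec:step3} in the \emph{curved} setting, where there is no global complex line $\span\{f(p_{i,k})\}$ onto which to project and no global Pythagoras identity. The resolution, following \cite{DF}, is to do everything locally: cover $\{\tau=c\}\cap(\text{nbhd of }f(b\overline\Rcal))$ by finitely many coordinate charts in which $\tau$ is (after composing with a biholomorphism) strictly convex, subdivide $b\overline\Rcal$ into arcs $\alpha_{i,j}$ so small that $f(\alpha_{i,j})$ lies in one such chart, and within each chart replace the orthogonal projection $\pi_{i,k}$ by the projection along the complex tangent hyperplane of $\{\tau=\mathrm{const}\}$ at $f(p_{i,k})$; the estimate $(\ref{eq:Pytha})$ then becomes the statement that moving from the shell to the level $\{\tau=\hat c\}$ along a curve whose projection to that hyperplane is short forces the curve itself to be long---a consequence of strict convexity, uniform on the compact set of relevant points. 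The bookkeeping of the neighbourhoods $U_{i,j}^k$, $C_{i,j}$, $D_{i,j}$ and the three gluing/approximation estimates is identical to Sec.~\ref{sec:step4}--\ref{sec:step5}, with Corollary~\ref{linear-gluing} replaced throughout by the spray-gluing of Remark~\ref{gluing-by-sprays}; since sprays into $X$ exist (Lemma~\ref{lem:Espray}) and glue (Lemma~\ref{lem:Gspray}), and since by Remark~\ref{rem:r=0} it suffices to work with $r=0$, no further input is needed. I would also remark that one may instead deduce Theorem~\ref{th:Stein} more cheaply in the special case of the statement needed for Corollary~\ref{cor:higher} by composing an $X\to\c^N$ proper embedding with the Euclidean result, but the point of Theorem~\ref{th:Stein} is to land inside $X$ itself, which is why the above intrinsic argument is required.
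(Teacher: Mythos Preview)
Your proposal is correct and follows essentially the same approach as the paper: local convexification of the level sets of a strictly plurisubharmonic exhaustion, the exposing-points plus Riemann--Hilbert construction carried out in those local charts, spray-gluing (Remark~\ref{gluing-by-sprays}) in place of linear gluing, and general position for embeddings when $\dim X\ge 3$. The paper organizes the argument in two stages rather than one---first treating a bounded strongly pseudoconvex domain (using \cite{DF} to push $f(b\overline\Rcal)$ into a collar of $bX$ free of critical values, then running the Lemma~\ref{lem:main}-analogue entirely within that collar), and then handling the general Stein case by exhausting $X$ by such domains $X_j=\{\rho<c_j\}$ and pushing the boundary successively to $bX_j$ with intrinsic boundary distance $>M_j\to\infty$; this two-stage structure cleanly separates the crossing of critical levels of $\tau$ (delegated to \cite{DF}) from the Pythagoras-type length gain, a point your single recursion leaves implicit and which you should make explicit.
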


\begin{proof} 
We outline the necessary modifications to the proof of Theorem \ref{th:intro2}.

Consider first the case when $X$ is a relatively compact, smoothly bounded, strongly pseudoconvex domain in another Stein manifold $Y$, and the hermitian metric $ds^2_X$ extends to a metric $ds^2_Y$ on $Y$. Choose a smooth strongly pluri\-sub\-harmonic function $\rho$ in a neighborhood of $\overline X$ in $Y$ such that $X=\{\rho<0\}$ and $d\rho\ne 0$ on $bX=\{\rho<0\}$. Pick a number $c_0>0$ such that $\rho$ has no critical values in $[-c_0,+c_0]$. 

Every point $p\in bX$ admits a pair of open neighborhoods $U'_p\Subset U_p\Subset Y$ and a holomorphic coordinate map $\theta_p\colon U_p \stackrel{\cong}{\longrightarrow} \b\subset \c^n$ onto a ball $\b$ in $\c^n$, centered at $0=\theta_p(p)$, such that $\theta(U'_p)$ is a smaller ball $\b' \subset \b$ also centered at $0$, and the function $\rho_p := \rho\circ \theta_p^{-1}\colon \b \to\r$ is strongly convex. In particular, $\Sigma_p := \theta_p(bX\cap U_p)$ is a strongly convex hypersurface in $\b$ that may be chosen $\Ccal^2$-close to a spherical cap. 
Furthermore, by shrinking $U_p$ we insure that the metric $ds^2_Y$ pulls back by $\theta_p$ to a hermitian metric on $\b$ that is comparable to the standard Euclidean metric. By choosing the neighborhood $U'_p$ small enough compared to $U_p$ we can also arrange that, for any point $q\in U'_p$, the intersection of the tangent plane at the point $q':=\theta_p(q) \in\b'$ to the strongly convex hypersurface $\{\rho_p=\rho_p(q')\} \subset \b$ with the convex domain $D_p:=\theta_p(D\cap U_p) \subset\b$ is a relatively compact subset of $\b$.

By compactness of $bX$ there are finitely many holomorphic coordinate charts $\Ucal=\{(U_j,\theta_j)\}_{j=1,\ldots,m}$ as above, and the corresponding subsets $U'_j \Subset U_j$, such that $bX \subset U':=\cup_{j=1}^m U'_j$. 

By decreasing the constant $c_0>0$ if necessary we can insure that $U'$ contains the collar $\{z \in Y \colon -c_0\le \rho(z) \le +c_0\}$. 

Fix a map $f\colon \overline R\to X$ of class $\Ascr(\Rcal,X)$. Pick a constant $c_1>0$ such that $f(\overline \Rcal) \subset \{z \in X\colon \rho(z)<-c_1\}$. Now choose a constant $c$ with $0<c<\min\{c_0,c_1\}$, and a number $\epsilon$ with $0<\epsilon <c$. By \cite{DF} we can approximate the map $f$, uniformly on a given compact set in $\Rcal$, by a map $f_0\in \Ascr(\Rcal,X)$ satisfying $-c<\rho(f_0(x)) < -c+\epsilon$ for all $x\in b\Rcal$. The proof in \cite{DF} uses the tools described in Sec.\ \ref{sec:preli} above; in particular, the Riemann-Hilbert problem and the method of gluing sprays. By general position we may assume that $f_0$ is an immersion. Now replace $f$ by $f_0$ and assume that $f$ satisfies these properties.

We now follow the construction in Sec.\ \ref{sec:mainlemma} to find a new immersion $\hat f\colon \overline \Rcal \to X$ which satisfies an analogue of Lemma \ref{lem:main} in this setting. We begin by subdividing the boundary $b\Rcal$ into subarcs $\alpha_{i,j}$ as in Subsec.\ \ref{sec:step1} such that each arc $f(\alpha_{i,j})\subset D$ is contained in one of the sets $U'_k$, and it satisfies the relevant conditions stated in Subsec.\ \ref{sec:step1} with respect to the local holomorphic coordinates on $U'_k$. We then perform the same construction as in the proof of Lemma \ref{lem:main} within the local chart $(U_k,\theta_k)$. The geometric conditions described above enable the use of stretching, first from the two endpoints of $\alpha_{i,j}$, and then from the middle segment, towards the boundary $bX\cap U_k$ so that the induced boundary distance in $\Rcal$ increases by a specific amount. Since the geometry in each local chart is essentially the same as in the model case (
 recall that $\theta_k(bX\cap U_k)$ is close to a spherical cap and the hermitian metric from $X$ is comparable with the Euclidean metric), the relevant estimates in Sec.\ \ref{sec:mainlemma} remain valid up to a uniformly bounded numerical factor whose presence does not present any problem in the proof. Each of the local modifications obtained in this way is glued with the existing immersion on the rest of the domain by the method of gluing sprays; see Lemma \ref{lem:Gspray} and Remark \ref{gluing-by-sprays}. 

In this way we approximate $f$, uniformly on a given compact in $\Rcal$, by a new immersion $\hat f\colon \overline \Rcal \to X$ so that the boundary moves closer to $bX$ by a controlled amount, and the boundary distance in the immersed curve also increases by a presribed amount, the two numbers being related in the Pythagoras' way; compare with Lemma \ref{lem:main}. The details are very similar to those given above and will be left out. The proof is finished by an induction just as in the proof of Theorem \ref{th:intro2}. 

This settles the case when $X$ is a bounded strongly pseudoconvex domain.

The general case is obtained as follows. Choose a smooth strongly plurisubharmonic exhaustion function $\rho\colon X\to\r$ with Morse critical po\-ints. Let $f\colon \overline \Rcal\to X$ be a map of class $\Ascr(\Rcal,X)$. Pick an increasing sequence $c_0<c_1<c_2\cdots$, with $\lim_{j\to\infty} c_j=+\infty$, such that every $c_j$ is  regular value of $\rho$ and $f(\overline \Rcal)\subset \{\rho<c_0\}$. The sets $X_j=\{\rho<c_j\}$ for $j=0,1,\ldots$ are smoothly bounded strongly pseudoconvex domains exhausting $X$. Fix a point $p_0\in \Rcal$. Choose an increasing sequence $0< M_0< M_1 < M_2<\cdots$ with $\lim M_j=+\infty$. By the special case explained above we can approximate $f$, uniformly on a given compact set $K\subset \Rcal$, by a holomorpic immersion $f_0\colon \overline \Rcal \to X_0$ such that $f_0(b\Rcal)$ is very close to $bX_0$, and for any curve $\gamma\subset \Rcal$ connecting $p_0$ to the boundary $b\Rcal$, the image curve $f_0(\gamma) \subset X_0$ has length a
 t  least $M_0+1$ with respect to $ds^2_X$. We can find a relatively compact subdomain $\Rcal_0\Subset \Rcal$ very close to $\Rcal$ such that the distance from $p_0$ to $\Rcal\setminus \Rcal_0$ with respect to the metric $f_0^* (ds^2_X)$ is $>M_0$, and such that $f_0(\overline \Rcal \setminus \Rcal_0)$ lies in a small neighborhood of $bX_0$. Applying again the special case we approximate the map $f_0$, uniformly on $\overline \Rcal_0$, by a holomorphic immersion $f_1\colon \overline \Rcal \to X_1$ such that $f_1(b\Rcal)$ is contained in a small neighborhood of $bX_1$ and the distance from $p_0$ to $b\Rcal$ with respect to the metric $f_1^* (ds^2_X)$ is $>M_1+1$. Next choose a domain $\Rcal_1\Subset \Rcal$ containing $\overline \Rcal_0$ such that the distance from $p_0$ to $\Rcal\setminus \Rcal_1$ in the metric $f_1^* (ds^2_X)$ is $>M_1$ and $f_1(\overline \Rcal\setminus \Rcal_1)$ is very close to $bX_1$. 

Continuing inductively, we obtain a sequence of holomorphic immersions $f_j\colon \overline\Rcal \to X_j$ that converges uniformly on compacta in $\Rcal$ to a proper complete holomorphic immersion $f=\lim_{j\to\infty} f_j\colon \Rcal\to X$. When $\dim X\ge 3$, the maps $f_j$ in the above sequence, and also the limit map $f$, can be chosen to be embeddings.
\end{proof}

\noindent\bf Acknowledgements. \rm
A.\ Alarc\'{o}n is supported by Vicerrectorado de Pol\'{i}tica Cient\'{i}fica e Investigaci\'{o}n de la Universidad de Granada, and is partially supported by MCYT-FEDER grants MTM2007-61775 and MTM2011-22547, Junta de Andaluc\'{i}a Grant P09-FQM-5088, and the grant PYR-2012-3 CEI BioTIC GENIL (CEB09-0010) of the MICINN CEI Program. 

F.\ Forstneri\v c is supported by the research program P1-0291 from ARRS, Republic of Slovenia.

\end{document}